\newtheorem{theorem}{Theorem}
\newtheorem{conjecture}{Conjecture}
\newtheorem{proposition}[theorem]{Proposition}
\newtheorem{corollary}[theorem]{Corollary}
\newtheorem{claim}{Claim}
\newtheorem{lemma}[theorem]{Lemma}
\theoremstyle{definition}
\newtheorem{fact}{Fact}
\newtheorem{definition}{Definition}
\newtheorem{remark}{Remark}
\newtheorem{example}{Example}
\newtheorem{observation}{Observation}
\DeclareMathOperator{\Prob}{Prob}
\DeclareMathOperator{\Sym}{Sym}
\DeclareMathOperator{\Aut}{Aut}
\DeclareMathOperator{\Orb}{Orb}
\DeclareMathOperator{\Stab}{Stab}
\DeclareMathOperator{\Aff}{Aff}
\DeclareMathOperator{\CT}{CT}
\DeclareMathOperator{\GL}{GL}
\DeclareMathOperator{\Cay}{Cay}
\DeclareMathOperator{\Link}{Link}
\DeclareMathOperator{\rank}{rank}
\newcommand{\Id}{\textrm{Id}}
\DeclareMathOperator{\Isom}{Isom}
\begin{document}

\title{On the structure of graphs which are locally indistinguishable from a lattice}
\author{Itai Benjamini\footnote{Department of Mathematics, Weizmann Institute of Science, Israel.}\,\, and David Ellis\footnote{School of Mathematical Sciences, Queen Mary, University of London, UK. Research supported in part by a Visiting Fellowship from the Weizmann Institute of Science.}}
\date{August 2016}
\maketitle

\begin{abstract}
For each integer $d \geq 3$, we obtain a characterisation of all graphs in which the ball of radius $3$ around each vertex is isomorphic to the ball of radius 3 in $\mathbb{L}^d$, the graph of the $d$-dimensional integer lattice. The finite, connected graphs with this property have a highly rigid, `global' algebraic structure; they can be viewed as quotient lattices of $\mathbb{L}^d$ in various compact $d$-dimensional orbifolds which arise from crystallographic groups. We give examples showing that `radius 3' cannot be replaced by `radius 2', and that `orbifold' cannot be replaced by `manifold'.

In the $d=2$ case, our methods yield new proofs of structure theorems of Thomassen \cite{thomassen-torus} and of M\'arquez, de Mier, Noy and Revuelta \cite{noy}, and also yield short, `algebraic' restatements of these theorems.

Our proofs use a mixture of techniques and results from combinatorics, geometry and group theory.
\vspace{0.2cm}

\noindent \footnotesize{MSC: Primary 05C75; Secondary 05C10.}
\end{abstract}

\section{Introduction}
Many results in Combinatorics concern the impact of `local' properties on `global' properties of combinatorial structures (e.g. graphs). A natural `local' condition to impose on a graph, is that it be regular. If $d \in \mathbb{N}$, a graph is said to be {\em $d$-regular} if all its vertices have degree $d$. Regular graphs have been extensively studied, and satisfy some rather strong `global' properties. For example, a well-known conjecture of Nash-Williams states that if $G$ is an $n$-vertex, $d$-regular graph with $d \geq \lfloor n/2\rfloor$, then $G$ can be decomposed into edge-disjoint Hamiltonian cycles and at most one perfect matching; this was recently proved for all sufficiently large $n$ by Csaba, K\"uhn, Lo, Osthus and Treglown \cite{cklot}. On the other hand, $d$-regular graphs are still quite `flexible' structures. For example, it was proved by Bollob\'as \cite{bollobas} and independently by McKay and Wormald \cite{wormald-mckay} that for any fixed integer $d \geq 3$, $G_d(n)$ has trivial automorphism group with high probability, i.e.
$$\Prob\{|\Aut(G_d(n))| = 1\} \to 1 \quad \textrm{as } n \to \infty.$$
This result was later extended to any $d \in \{3,4,\ldots,n-4\}$ by Kim, Sudakov and Vu \cite{ksv}, answering a question of Wormald.

It is natural to ask what happens to the global structure of a graph when we impose a `local' condition which is stronger than being $d$-regular. A natural condition to impose is that the subgraph induced by the ball of radius $r$ in $G$ around any vertex, is isomorphic to some fixed graph, for some fixed, small $r \in \mathbb{N}$. We proceed to give definitions which make this precise.

If $G$ is a (simple, undirected) graph, we write $V(G)$ for the vertex-set of $G$ and $E(G)$ for its edge-set. If $S \subset V(G)$, we write $G[S]$ for the subgraph of $G$ induced by $S$, i.e. the maximal subgraph of $G$ with vertex-set $S$. If $v,w \in V(G)$, the {\em distance from $v$ to $w$ in $G$} is defined to be the minimum number of edges in a path from $v$ to $w$ in $G$; it is denoted by $d_{G}(v,w)$. If $v \in V(G)$, and $r \in \mathbb{N}$, we define $\Link_r(v,G)$ to be the subgraph of $G$ induced by the set of vertices of $G$ with distance at most $r$ from $v$, and we define
$$\Link_r^{-}(v,G) := \Link_r(v,G) \setminus \{\{x,y\} \in E(G):\ d_G(v,x)=d_G(v,y)=r\}.$$
A {\em rooted graph} is an ordered pair $(G,v)$ where $G$ is a graph, and $v \in V(G)$.

Our key definitions are as follows.
\begin{definition}
\label{defn:r-locally}
If $(F,u)$ is a rooted graph, we say that a graph $G$ is {\em $r$-locally $(F,u)$} if for every vertex $v \in V(G)$, there exists a graph isomorphism $\phi: \Link_r(u,F) \to \Link_r(v,G)$ such that $\phi(u)=v$.
\end{definition}

\begin{definition}
\label{defn:weakly-r-locally} If $(F,u)$ is a rooted graph, we say that a graph $G$ is {\em weakly $r$-locally $(F,u)$} if for every vertex $v \in V(G)$, there exists a graph isomorphism $\phi: \Link_r^{-}(u,F) \to \Link_r^{-}(v,G)$ such that $\phi(u)=v$.
\end{definition}

Clearly, we have the implications
$$ G \textrm{ is } r\textrm{-locally }(F,u) \Rightarrow G \textrm{ is weakly } r\textrm{-locally }(F,u) \Rightarrow G \textrm{ is }(r-1)\textrm{-locally }(F,u),$$
for any $r \in \mathbb{N}$.

We remark that if $F$ is vertex-transitive, then Definitions \ref{defn:r-locally} and \ref{defn:weakly-r-locally} are independent of the choice of $u$. Hence, if $F$ is a vertex-transitive graph, we say that a graph $G$ is {\em $r$-locally $F$} if there exists $u \in V(F)$ such that $G$ is $r$-locally $(F,u)$. Similarly, we say that $G$ is {\em weakly $r$-locally $F$} if there exists $u \in V(F)$ such that $G$ is weakly $r$-locally $(F,u)$.

As a simple example, if $d \in \mathbb{N}$, let $T_d$ denote the infinite $d$-regular tree. A graph $G$ is $r$-locally $T_d$ if and only if it is a $d$-regular graph with girth at least $2r+2$, and is weakly $r$-locally $T_d$ if and only if it is a $d$-regular graph with girth at least $2r+1$.

Perhaps not surprisingly, for many rooted graphs $(F,u)$, graphs which are $r$-locally-$(F,u)$ (for small $r$) have a very rigid global structure. Sometimes, there is only one such connected graph up to isomorphism --- for example, the only connected graph which is 1-locally-$K_{t+1}$, is $K_{t+1}$ itself.

The $r=1$ case of Definition \ref{defn:r-locally} is familiar in the literature. If $G$ is a graph and $v \in V(G)$, we write $\Gamma(v)$ for the set of neighbours of $v$, and we write $L(v,G) : = G[\Gamma(v)]$ for the subgraph of $G$ induced by the neighbours of $v$; $L(v,G)$ is often called the {\em link of $G$ at $v$}. Note that a graph $G$ is $1$-locally $(F,u)$ if and only if $L(v,G) \cong L(u,F)$ for every vertex $v$ of $G$. (Graphs which are $1$-locally-$(F,u)$ for some rooted graph $(F,u)$ are usually called {\em graphs of constant link}.)

Many authors have given succinct necessary or sufficient conditions on $H$ for there to exist finite (or, in some cases, possibly infinite) graphs of constant link $H$, for graphs $H$ within various classes; results of this kind can be found e.g. in \cite{brown-connelly,bugata,doyen,hall-4,sedlacek,weetman-1}. However, the problem seems very hard in general. For more background, the reader may consult the survey \cite{hell}.

In this paper, we will focus on the case where $F$ is a Euclidean lattice. If $d \in \mathbb{N}$, the {\em $d$-dimensional lattice} $\mathbb{L}^d$ is the graph with vertex-set $\mathbb{Z}^d$, and edge-set
$$\{\{x,x+e_i\}:\ x \in \mathbb{Z}^d, i \in [d]\},$$
where $e_i = (0,0,\ldots,0,1,0,\ldots,0)$ denotes the $i$th unit vector in $\mathbb{R}^d$. We study the properties of graphs which are $r$-locally $\mathbb{L}^d$ or weakly $r$-locally $\mathbb{L}^d$, for various $r$. We feel $\mathbb{L}^d$ is a natural `second' case to study, the `first' case perhaps being $T_{2d}$. Note that $T_{2d}$ is the standard Cayley graph of the free group on $d$ generators, and as remarked above, a graph is $r$-locally $T_{2d}$ if and only if it is a $(2d)$-regular graph of girth at least $2r+2$; regular graphs of high girth have been intensively studied (see e.g. \cite{hoory-thesis,luw,lps}). By comparison, $\mathbb{L}^d$ is the standard Cayley graph of the free {\em Abelian} group on $d$ generators.

Note that a graph is $r$-locally $\mathbb{L}^1$ if and only if it is a vertex-disjoint union of cycles of length at least $2r+2$, and is weakly $r$-locally $\mathbb{L}^1$ if and only if it is a vertex-disjoint union of cycles of length at least $2r+1$, so the first interesting case is $d=2$. 

It turns out that for all $d \geq 2$ and all $r \geq 3$, graphs which are weakly $r$-locally $\mathbb{L}^d$ have a very rigidly proscribed, algebraic global structure, in stark contrast to regular graphs of high girth. By comparison, the uniform random $d$-regular graph $G_d(n)$ can be generated using a simple, purely combinatorial process, namely, the Configuration Model of Bollob\'as \cite{config}, and $G_d(n)$ has girth at least $g$ with positive probability for any fixed $d,g \geq 3$ (see \cite{bollobas-cycles,wormald-thesis}). 

The highly rigid, algebraic structure of graphs which are $r$-locally-$\mathbb{L}^d$ is not perhaps very surprising, in the light of results on complexes with specified links. Indeed, one may associate to a graph $G$, a 2-dimensional CW complex $S$, by attaching 2-cells to certain of its cycles; the property of $G$ being weakly $r$-locally-$(F,u)$ then translates to a property of the links or the combinatorial $r$-balls of $S$. (Rigidity phenomena for complexes with specified links or combinatorial $r$-balls have been observed by many authors, e.g. by Tits \cite{tits-1,tits-2} in his celebrated work on buildings, and more recently e.g. by Ballman and Brin \cite{ballman-brin}, \'Swiatkowski \cite{swiatkowski} and Wise \cite{wise}.) One of our proofs takes advantage of this connection.

Our main result is the following structure theorem for graphs which are weakly $3$-locally $\mathbb{L}^d$.

\begin{theorem}
\label{thm:cover-d}
Let $d \geq 2$ be an integer, and let $G$ be a connected graph which is weakly $3$-locally $\mathbb{L}^d$. Then there exists a normal covering map\footnote{See section 2 for the definition of a normal covering map.} from $\mathbb{L}^d$ to $G$. 
\end{theorem}

We will give a construction (Example \ref{example:Ld}) to show that Theorem \ref{thm:cover-d} is best possible for each $d \geq 3$, in the sense that for each $d \geq 3$, there exist finite connected graphs which are 2-locally $\mathbb{L}^d$ but which are not covered by $\mathbb{L}^d$. The construction is algebraic.

In the $d=2$ case, we prove something slightly stronger, yielding new proofs of structure theorems of Thomassen \cite{thomassen-torus} and of M\'arquez, de Mier, Noy and Revuelta \cite{noy}, and also yielding short, `algebraic' restatements of these theorems. To state our result, we need the following definition, implicit in \cite{thomassen-torus}.

\begin{definition}
Let $G$ be a graph. We say that $G$ has the {\em 4-cycle wheel property} if it is 4-regular, and there exists a family $\mathcal{C}$ of 4-cycles of $G$ such that for every vertex $v$ of $G$, there are exactly four 4-cycles in $\mathcal{C}$ which contain $v$, and the union of these four 4-cycles is isomorphic to the following graph.
\end{definition}
\begin{center}
\begin{tikzpicture}
\tikzstyle{every node}=[draw,circle,fill=black,minimum size=6pt,inner sep=0pt]
\draw (0,0) node {};
\draw (0,1) node {};
\draw (0,-1) node {};
\draw (-1,0) node {};
\draw (1,0) node {};
\draw (1,1) node {};
\draw(-1,1) node {};
\draw (-1,-1) node {};
\draw (1,-1) node {};
\draw (-1,0) -- (0,0) -- (1,0);
\draw (0,-1) -- (0,0) -- (0,1);
\draw (-1,0) -- (-1,1) -- (0,1) -- (1,1) -- (1,0) -- (1,-1) -- (0,-1) -- (-1,-1) -- (-1,0);
\end{tikzpicture}
\end{center}

Note that a graph which is weakly 2-locally-$\mathbb{L}^2$ has the 4-cycle wheel property; on the other hand, a $3 \times 3$ discrete torus has 4-cycle wheel property, but is not weakly $2$-locally $\mathbb{L}^2$.

We prove the following structure theorem for graphs with the 4-cycle wheel property.

\begin{theorem}
\label{thm:cover-2}
Let $G$ be a connected graph with the 4-cycle wheel property. Then there exists a normal covering map from $\mathbb{L}^2$ to $G$.
\end{theorem}

Our (short) proof of Theorem \ref{thm:cover-2} takes advantage of some classical techniques in geometry. Namely, we use the family $\mathcal{C}$ of 4-cycles to define a 2-dimensional cubical CW complex $S$ in the natural way, and we consider the universal cover of $S$.

We note that the technique of `promoting' a graph to a 2-complex using its cycles has been used before in the context of graphs with constant link, for example by Nedela in \cite{nedela,nedela-2}. We note also that Weetman \cite{weetman-1} obtained results on graphs of constant link by considering topological properties of the {\em clique complex} of a graph (the simplicial complex whose simplices correspond to cliques in the graph).

We remark that Theorem \ref{thm:cover-d} can also be given a proof along similar lines, by using the 4-cycles of $G$ to define a 2-dimensional cubical CW complex $S$ in the natural way, and considering an appropriate `thickening' of $S$ to an incomplete flat manifold. However, as this proof appeals to geometric machinery which may unfamiliar to some readers, we have chosen to present here a somewhat longer, but more elementary and more combinatorial proof of Theorem \ref{thm:cover-d}.

We use some fairly standard arguments from topological graph theory and group theory to deduce the following from Theorem \ref{thm:cover-2}.

\begin{corollary}
\label{corr:torusklein2}
Let $G$ be a finite, connected graph with the 4-cycle wheel property. Then there exists a subgroup $\Gamma \leq \Aut(\mathbb{L}^2)$ which acts freely on $\mathbb{L}^2$, such that $G$ is isomorphic to the quotient graph\footnote{See Definitions \ref{definition:free-action} and \ref{definition:quotient} in section 2 for the definitions of a quotient graph and a free action.} $\mathbb{L}^2/\Gamma$, and such that the orbit space $\mathbb{R}^2/\Gamma$ is either a torus or a Klein bottle.
\end{corollary}
It follows that we can view any finite, connected graph with the 4-cycle wheel property as a `quotient lattice' of $\mathbb{L}^2$ inside a torus or a Klein bottle; in particular, it must be a quadrangulation of the torus or of the Klein bottle, as observed by Thomassen in \cite{thomassen-torus}. One can use Corollary \ref{corr:torusklein2} to deduce the structure theorems for graphs with the 4-cycle wheel property which are proved by Thomassen \cite{thomassen-torus} and by M\'arquez, de Mier, Noy and Revuelta \cite{noy}, and also to give short, clean, `algebraic' descriptions of the graphs which are defined combinatorially (at some length) in these structure theorems. We omit the details of these deductions, as they are straightforward.

Likewise, in the $d \geq 3$ case, we deduce the following from Theorem \ref{thm:cover-d}.

 \begin{corollary}
 \label{corr:structure-d}
 Let $d \geq 3$ be an integer, and let $G$ be a finite, connected graph which is weakly $3$-locally $\mathbb{L}^d$. Then there exists a subgroup $\Gamma \leq \Aut(\mathbb{L}^d)$ which acts freely on $\mathbb{L}^d$, such that $G$ is isomorphic to the quotient graph $\mathbb{L}^d/\Gamma$. Moreover, viewed as a subgroup of $\Isom(\mathbb{R}^d)$, $\Gamma$ is a $d$-dimensional crystallographic group, and the orbit space $\mathbb{R}^d/\Gamma$ is a compact topological orbifold.
 \end{corollary}
It follows that we can view any finite, connected graph which is weakly $3$-locally $\mathbb{L}^d$, as a `quotient lattice' of $\mathbb{L}^d$ inside a compact $d$-dimensional topological orbifold. (See Definition \ref{defn:orbifold} for the definition of a topological orbifold.) We will give an example to show that for each $d \geq 7$, the orbit space $\mathbb{R}^d/\Gamma$ in Corollary \ref{corr:structure-d} need not be a topological manifold (see Example \ref{example:not-manifold}).

\begin{remark}
Bieberbach's theorems \cite{bieberbach-1911,bieberbach-1912} imply that for any $d \in \mathbb{N}$, there are only a finite number ($f(d)$, say) of affine-conjugacy classes of $d$-dimensional crystallographic groups (where two crystallographic groups are said to be {\em affine-conjugate} if they are conjugate via an affine transformation of $\mathbb{R}^d$). It follows that the orbit space $\mathbb{R}^d/\Gamma$ in Corollary \ref{corr:structure-d} is homeomorphic to one of at most $f(d)$ topological spaces. (See Fact \ref{fact:bieberbach}.)
\end{remark}

We also obtain an exact (algebraic) characterisation of the graphs which are $3$-locally $\mathbb{L}^d$, or weakly $3$-locally $\mathbb{L}^d$, for each $d \geq 3$ (Proposition \ref{prop:quotient-d}). As its statement is a little more technical than that of Corollary \ref{corr:structure-d}, we defer it until section 4.
\newline

The remainder of this paper is structured as follows. In section 2, we give the definitions, background and standard tools which we require from topological graph theory, topology and group theory. This section is rather long, but as various `standard' texts in topology and geometry use slightly different conventions, we prefer to set out our conventions in full, to avoid ambiguity. All or part of section 2 can be skipped by readers familiar with the relevant areas. In section 3, we prove Theorems \ref{thm:cover-d} and \ref{thm:cover-2}, and we construct an example showing that Theorem \ref{thm:cover-d} is best possible in a certain sense. In section 4, we deduce Corollary \ref{corr:torusklein2} from Theorem \ref{thm:cover-2} and Corollary \ref{corr:structure-d} from Theorem \ref{thm:cover-d}, using some fairly standard arguments from topological graph theory and group theory. We conclude with a discussion of some open problems and related results in section \ref{sec:conc}.

\section{Definitions, background and tools}

\subsubsection*{Basic graph-theoretic notation and terminology}

Unless otherwise stated, all graphs will be undirected and simple (that is, without loops or multiple edges); they need not be finite. An undirected, simple graph is defined to be an ordered pair of sets $(V,E)$, where $E \subset {V \choose 2}$; $V$ is called the {\em vertex-set} and $E$ the {\em edge-set}. An edge $\{v,w\}$ of a graph will often be written $vw$, for brevity.

If $S \subset V(G)$, we let $N(S)$ denote the {\em neighbourhood} of $S$, i.e.
$$N(S) = S \cup \{v \in V(G):\ sv \in E(G)\textrm{ for some } s \in S\}.$$
We say a graph is {\em locally finite} if each of its vertices has only finitely many neighbours.

If $G$ is a graph, and $u,v \in V(G)$ are in the same component of $G$, the {\em distance from $u$ to $v$ in $G$} is the minimum length of a path from $u$ to $v$; it is denoted by $d_{G}(u,v)$ (or by $d(u,v)$, when the graph $G$ is understood). A path of minimum length between $u$ and $v$ (i.e., a path of length $d_{G}(u,v)$) is called a {\em geodesic}. If $G$ is a graph, and $v$ is a vertex of $G$, the {\em ball of radius $r$ around $v$} is defined by
$$B_r(v,G) := \{w \in V(G):\ d_{G}(v,w) \leq r\},$$
i.e. it is the set of vertices of $G$ of distance at most $r$ from $v$.

\subsubsection*{Background and tools from topological graph theory}

We follow \cite{gross-tucker,nedela-survey}.

\begin{definition}
If $F$ and $G$ are graphs, and $p:V(F) \to V(G)$ is a graph homomorphism from $F$ to $G$, we say that $p$ is a {\em covering map} if $p$ maps $\Gamma(v)$ bijectively onto $\Gamma(p(v))$, for all $v \in V(F)$. In this case, we say that $F$ {\em covers} $G$.
\end{definition}

\begin{remark}
\label{remark:surjective}
It is easy to see that if $F$ and $G$ are graphs with $G$ connected, and $p:V(F) \to V(G)$ is a covering map, then $p$ is surjective.
\end{remark}

\begin{definition}
Let $F$ and $G$ be graphs, and let $p:V(F) \to V(G)$ be a covering of $G$ by $F$. The pre-image of a vertex of $G$ under $p$ is called a {\em fibre} of $p$.
\end{definition}

\begin{definition}
Let $F$ and $G$ be graphs, and let $p:V(F) \to V(G)$ be a covering of $G$ by $F$. An automorphism $\phi \in \Aut(F)$ is said to be a {\em covering transformation of $p$} if $p \circ \phi = p$. The group of covering transformations of $p$ is denoted by $\CT(p)$.
\end{definition}

Note that any covering transformation of $p$ acts on each fibre of $p$, but it need not act transitively on any fibre of $p$.

\begin{definition}
Let $F$ and $G$ be graphs, and let $p:V(F) \to V(G)$ be a covering of $G$ by $F$. We say that $p$ is a {\em normal} covering if $\CT(p)$ acts transitively on each fibre of $p$.
\end{definition}

\begin{remark}
It is well known (and easy to check) that if $F$ is a connected graph, and $p:V(F) \to V(G)$ is a covering of $G$ by $F$, then if $\CT(p)$ acts transitively on some fibre of $p$, it acts transitively on every fibre of $p$. Hence, in the previous definition, `on each fibre' may be replaced by `on some fibre'.
\end{remark}

\begin{definition}
Let $\Gamma$ be a group, let $X$ be a set, and let $\alpha:\Gamma \times X \to X$ be an action of $\Gamma$ on $X$. For each $x \in X$, we write $\Orb_{\Gamma}(x): = \{\alpha(\gamma,x):\ \gamma \in \Gamma\}$ for the $\Gamma$-orbit of $x$, and $\Stab_{\Gamma}(x) := \{\gamma \in \Gamma:\ \alpha(\gamma,x)=x\}$ for the stabiliser of $x$ in $\Gamma$. When the group $\Gamma$ is understood, we suppress the subscript $\Gamma$.
\end{definition}

\begin{definition}
If $F$ is a graph and $\Gamma \leq \Aut(F)$, the {\em minimum displacement} of $\Gamma$ is defined to be $D(\Gamma) : = \min\{d_F(x,\gamma(x)):\ x \in V(F),\ \gamma \in \Gamma \setminus \{\Id\}\}$.
\end{definition}

\begin{definition}
\label{definition:free-action}
Let $\Gamma$ be a group, let $X$ be a set, and let $\alpha:\Gamma \times X \to X$ be an action of $\Gamma$ on $X$. We say that $\alpha$ is {\em free} if $\alpha(\gamma,x) \neq x$ for all $x \in X$ and all $\gamma \in \Gamma \setminus \{\textrm{Id}\}$.
\end{definition}

\begin{definition}(Quotient of a graph.)
\label{definition:quotient}

Let $F$ be a simple graph, and let $\Gamma \leq \Aut(F)$. Then $\Gamma$ acts on $V(F)$ via the natural left action $(\gamma,x) \mapsto \gamma(x)$, and on $E(F)$ via the natural (induced) action $(\gamma,\{x,y\}) \mapsto \{\gamma(x),\gamma(y)\}$. We define the {\em quotient graph} $F/\Gamma$ to be the multigraph whose vertices are the $\Gamma$-orbits of $V(F)$, and whose edges are the $\Gamma$-orbits of $E(F)$, where for any edge $\{x,y\} \in E(F)$, the edge $\Orb(\{x,y\})$ has endpoints $\Orb(x)$ and $\Orb(y)$. Note that $F/\Gamma$ may have loops (if $\{x,\gamma(x)\} \in E(F)$ for some $\gamma \in \Gamma$ and some $v \in V(F)$), and it may also have multiple edges (if there exist $\{u_1,u_2\},\{v_1,v_2\} \in E(F)$ with $\gamma_1(u_1) = v_1$ and $\gamma_2(u_2) = v_2$ for some $\gamma_1,\gamma_2 \in \Gamma$, but $\{\gamma(u_1),\gamma(u_2)\} \neq \{v_1,v_2\}$ for all $\gamma \in \Gamma$).
\end{definition}

\begin{definition}
Let $G$ be a graph, and let $\Gamma \leq \Aut(G)$. We say that $\Gamma$ {\em acts freely on $G$} if the natural actions of $\Gamma$ on $V(G)$ and $E(G)$ are both free actions, or equivalently, if no element of $\Gamma \setminus \{\Id\}$ fixes any vertex or edge of $G$. 
\end{definition}

The following two lemmas are well-known, and easy to check.

\begin{lemma}
\label{lemma:free}
Let $F$ be a connected (possibly infinite) graph, let $G$ be a graph, and let $p:V(F) \to V(G)$ be a covering map from $F$ to $G$. Then $\CT(p)$ acts freely on $F$.
\end{lemma}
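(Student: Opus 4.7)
The plan is to show the contrapositive: if a covering transformation $\phi \in \CT(p)$ fixes any vertex or edge of $F$, then $\phi = \Id$. The key input will be that $p$ restricts to a bijection $\Gamma(v) \to \Gamma(p(v))$ for every $v \in V(F)$, together with the connectedness of $F$.

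First I would handle the case of a fixed vertex. Suppose $\phi \in \CT(p)$ and $\phi(v)=v$ for some $v \in V(F)$. Let $S = \{x \in V(F) : \phi(x) = x\}$; the aim is to show $S = V(F)$. I would argue that $S$ is closed under taking neighbours: if $x \in S$ and $y \in \Gamma(x)$, then $\phi(y) \in \Gamma(\phi(x)) = \Gamma(x)$ since $\phi$ is a graph automorphism, and moreover $p(\phi(y)) = p(y)$ because $\phi \in \CT(p)$. Since $p$ restricts to a bijection on $\Gamma(x)$, the two neighbours $y$ and $\phi(y)$ of $x$ with the same image under $p$ must coincide, so $\phi(y) = y$. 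By connectedness of $F$, iterating along any path from $v$ gives $S = V(F)$, hence $\phi$ fixes every vertex. Because $F$ is simple, a graph automorphism fixing every vertex is the identity on edges as well, so $\phi = \Id$.

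Next I would rule out a non-trivial covering transformation fixing an edge. Suppose $\phi \in \CT(p)$ and $\phi$ fixes an edge $\{x,y\} \in E(F)$. Either $\phi$ fixes both endpoints, in which case the previous paragraph forces $\phi = \Id$, or $\phi$ swaps them: $\phi(x) = y$ and $\phi(y) = x$. In the swap case, $p(x) = p(\phi(x)) = p(y)$; but $\{x,y\} \in E(F)$ forces $\{p(x),p(y)\} \in E(G)$, which would be a loop in the simple graph $G$, a contradiction. Hence the swap case does not occur.

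The argument is essentially routine once the neighbourhood-bijection property of covering maps is used correctly; the only mild subtlety is the edge-swap case, which I would expect to be the main (very small) obstacle, and it is dispatched by the simplicity hypothesis on $G$. No heavier topological machinery is needed, since the graph setting lets us replace path-lifting and unique continuation by a single induction on graph distance from a fixed vertex.
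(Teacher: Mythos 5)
Your proof is correct and follows essentially the same route as the paper's: the paper establishes the vertex case by applying the unique walk-lifting property along a path from the fixed vertex, which is exactly the neighbourhood-bijection induction you carry out by hand, and the edge case is dispatched identically by reducing to an endpoint swap forcing $p(x)=p(y)$, contradicting simplicity/injectivity on neighbourhoods. No gaps; the only difference is that you inline the walk-lifting argument rather than citing it.
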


\begin{lemma}
\label{lemma:iso}
Let $F$ and $G$ be (possibly infinite) graphs with $G$ connected, and suppose $p:V(F) \to V(G)$ is a normal covering map from $F$ to $G$. Then there is a graph isomorphism between $G$ and $F/\CT(p)$.
\end{lemma}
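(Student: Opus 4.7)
The plan is to define the natural map $\bar{p}: V(F/\CT(p)) \to V(G)$ by $\bar{p}(\Orb(x)) = p(x)$, and show that this extends to a graph isomorphism between $F/\CT(p)$ and $G$. The map $\bar{p}$ is well-defined on orbits because, by definition of a covering transformation, $p \circ \phi = p$ for every $\phi \in \CT(p)$.

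First I would check that $\bar{p}$ is a bijection on vertices. Injectivity uses the hypothesis that $p$ is a \emph{normal} covering: if $p(x) = p(y)$, then $x$ and $y$ lie in the same fibre of $p$, so by normality there exists $\phi \in \CT(p)$ with $\phi(x) = y$, whence $\Orb(x) = \Orb(y)$. Surjectivity follows from Remark \ref{remark:surjective}, since $G$ is assumed connected. Next I would verify that $F/\CT(p)$ is actually a simple graph, so that it makes sense to speak of a graph isomorphism (rather than just a multigraph homomorphism). The absence of loops is exactly the content of Lemma \ref{lemma:free}, since $\CT(p)$ acts freely on $V(F) \cup E(F)$.

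The step I expect to be the main obstacle is ruling out multiple edges in $F/\CT(p)$, so I would do this carefully. Suppose $\{x,y\}, \{x',y'\} \in E(F)$ satisfy $\Orb(x) = \Orb(x')$ and $\Orb(y) = \Orb(y')$. Choose $\phi \in \CT(p)$ with $\phi(x) = x'$. Then $\phi(y)$ is a neighbour of $x'$ in $F$, and $p(\phi(y)) = p(y) = p(y')$. Since $p$ restricts to a bijection between $\Gamma(x')$ and $\Gamma(p(x'))$, we conclude $\phi(y) = y'$, so $\phi(\{x,y\}) = \{x',y'\}$ and the two edges define the same edge of $F/\CT(p)$. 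Hence parallel edges collapse, and $F/\CT(p)$ is simple.

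Finally I would promote $\bar{p}$ to a graph isomorphism. The induced map on edges sends $\Orb(\{x,y\}) \mapsto \{p(x),p(y)\}$, which is a well-defined edge of $G$ since $p$ is a homomorphism; injectivity on edges follows from the argument of the previous paragraph together with injectivity on vertices, and surjectivity on edges is immediate from the covering property: given $\{u,v\} \in E(G)$, pick any $x \in p^{-1}(u)$ and let $y$ be the unique neighbour of $x$ in $F$ with $p(y) = v$. The resulting bijections on vertices and edges clearly preserve incidence, so $\bar{p}$ is the desired graph isomorphism $F/\CT(p) \to G$.
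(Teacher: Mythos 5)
Your overall strategy coincides with the paper's: define $\Orb(x)\mapsto p(x)$, use normality for injectivity, Remark \ref{remark:surjective} for surjectivity, and rule out multiple edges by composing a covering transformation carrying $x$ to $x'$ with the local bijectivity of $p$ on neighbourhoods (the paper phrases this last step via the unique walk-lifting property, but it is the same argument).

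There is, however, one genuine error: the claim that ``the absence of loops is exactly the content of Lemma \ref{lemma:free}'' is false. A loop in $F/\CT(p)$ arises when $\{x,\phi(x)\}\in E(F)$ for some $\phi\in\CT(p)\setminus\{\Id\}$, and freeness of the action only forbids $\phi(x)=x$ and $\phi(e)=e$; it does not forbid $\phi$ from moving $x$ to one of its neighbours. (For instance, the rotation group of a $6$-cycle acts freely on both vertices and edges, yet the quotient is a single vertex with a loop --- of course, such a rotation cannot be a covering transformation onto a simple graph, which is exactly why the correct argument must invoke the covering property of $p$ rather than freeness alone.) The repair is the paper's one-line argument: if $\{x,\phi(x)\}\in E(F)$ then $p(\phi(x))=p(x)$, so $p$ sends the neighbour $\phi(x)$ of $x$ to $p(x)$ itself, which is not a neighbour of $p(x)$ because $G$ is simple; this contradicts the fact that $p$ maps $\Gamma(x)$ bijectively onto $\Gamma(p(x))$. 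With that substitution your proof is complete and matches the paper's.
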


\subsubsection*{Some background from topology and group theory}

\begin{fact}
The group $\Isom(\mathbb{R}^d)$ of isometries of $d$-dimensional Euclidean space satisfies
\begin{align*} \Isom(\mathbb{R}^d) & = \{t \circ \sigma:\ t \in T(\mathbb{R}^d),\ \sigma \in O(d)\}\\
& = \{\sigma \circ t:\ t \in T(\mathbb{R}^d),\ \sigma \in O(d)\}\\
& = T(\mathbb{R}^d) \rtimes O(d),
\end{align*}
where
$$T(\mathbb{R}^d) := \{x \mapsto x+v:\ v \in \mathbb{R}^d\}$$
denotes the group of all translations in $\mathbb{R}^d$, and $O(d) \leq \GL(\mathbb{R}^d)$ denotes the group of all real orthogonal $d \times d$ matrices.
\end{fact}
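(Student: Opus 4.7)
The plan is to reduce every isometry to a composition of a translation with a linear orthogonal map, using the point $f(0)$ as the translation parameter and then exploiting the polarisation identity to show that the resulting origin-fixing isometry is orthogonal.

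First, I would fix $f\in\Isom(\mathbb{R}^d)$, set $v:=f(0)$, and let $t_{v}\in T(\mathbb{R}^d)$ denote translation by $v$. Then $\sigma:=t_{-v}\circ f$ is an isometry with $\sigma(0)=0$, so the task reduces to showing that any origin-fixing isometry $\sigma$ lies in $O(d)$. Since $\sigma$ preserves distances and fixes $0$, we have $\|\sigma(x)\|=\|x\|$ for all $x$, and the polarisation identity $\langle x,y\rangle=\tfrac12(\|x\|^2+\|y\|^2-\|x-y\|^2)$ then gives $\langle\sigma(x),\sigma(y)\rangle=\langle x,y\rangle$ for all $x,y\in\mathbb{R}^d$. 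From preservation of the inner product one checks linearity directly: for any basis $e_1,\ldots,e_d$, the vectors $\sigma(e_1),\ldots,\sigma(e_d)$ form an orthonormal basis, and the coordinates of $\sigma(x)$ in this basis are $\langle\sigma(x),\sigma(e_i)\rangle=\langle x,e_i\rangle$, which are linear in $x$. Hence $\sigma$ is a linear isometry, i.e.\ $\sigma\in O(d)$, and $f=t_v\circ\sigma$ as required. This establishes the first equality.

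For the second equality, I would use the conjugation identity $\sigma\circ t_w=t_{\sigma(w)}\circ\sigma$, which follows immediately from the linearity of $\sigma$: given $f=t_v\circ\sigma$, writing $w:=\sigma^{-1}(v)$ yields $f=\sigma\circ t_w$, and conversely every $\sigma\circ t_w$ equals $t_{\sigma(w)}\circ\sigma$. Thus the two decompositions describe the same set, which is all of $\Isom(\mathbb{R}^d)$.

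For the semidirect product decomposition, I would verify the three standard conditions: (i) $T(\mathbb{R}^d)\cap O(d)=\{\Id\}$, since the only translation fixing $0$ is the identity; (ii) $T(\mathbb{R}^d)\cdot O(d)=\Isom(\mathbb{R}^d)$, by the first equality; and (iii) $T(\mathbb{R}^d)$ is normal in $\Isom(\mathbb{R}^d)$, by the conjugation formula $\sigma\circ t_w\circ\sigma^{-1}=t_{\sigma(w)}$, which shows that conjugating a translation by any element of $O(d)$ (and hence by any element of $\Isom(\mathbb{R}^d)$) yields another translation. Together these give $\Isom(\mathbb{R}^d)=T(\mathbb{R}^d)\rtimes O(d)$. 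The only non-routine step in all of this is the derivation of linearity of $\sigma$ from the polarisation identity, but since everything reduces to inner-product preservation, this too is brief.
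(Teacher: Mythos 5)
The paper records this statement as a background \emph{Fact} without proof, so there is no argument of the authors' to compare against; your proof is correct and is the standard one (reduce to an origin-fixing isometry via $t_{-f(0)}$, use the polarisation identity to get inner-product preservation and hence linearity, so $\sigma\in O(d)$, and then use the conjugation identity $\sigma\circ t_w\circ\sigma^{-1}=t_{\sigma(w)}$ both for the second equality and for the normality of $T(\mathbb{R}^d)$ in the semidirect-product decomposition). The only cosmetic slip is the phrase ``for any basis $e_1,\ldots,e_d$'': the expansion $\sigma(x)=\sum_i\langle x,e_i\rangle\,\sigma(e_i)$ requires the $e_i$ to be orthonormal, e.g.\ the standard basis, since otherwise $\langle\sigma(x),\sigma(e_i)\rangle$ are not the coordinates of $\sigma(x)$ in the basis $\sigma(e_1),\ldots,\sigma(e_d)$.
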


\begin{fact}
\label{fact:aut-ld}
For any $d \in \mathbb{N}$, we have
\begin{align*} \Aut(\mathbb{L}^d) & = \{t \circ \sigma:\ t \in T(\mathbb{Z}^d),\ \sigma \in B_d\}\\
& = \{\sigma \circ t:\ t \in T(\mathbb{Z}^d),\ \sigma \in B_d\}\\\
&= T(\mathbb{Z}^d) \rtimes B_d,
\end{align*}
where
$$T(\mathbb{Z}^d) :=  \{x \mapsto x+v:\ v \in \mathbb{Z}^d\}$$
denotes the group of all translations by elements of $\mathbb{Z}^d$, and
$$B_d = \{\sigma \in GL(\mathbb{R}^d):\ \sigma(\{\pm e_i:\ i \in [d]\}) = \{\pm e_i: \ i \in [d]\}\},$$
denotes the {\em $d$-dimensional hyperoctahedral group}, which is the symmetry group of the $d$-dimensional (solid) cube with set of vertices $\{-1,1\}^d$, and can be identified with the permutation group
$$\{\sigma \in \Sym([d] \cup \{-i:\ i \in [d]\}):\ \sigma(-i) = -\sigma(i)\ \forall i\},$$
in the natural way (identifying $e_i$ with $i$ and $-e_i$ with $-i$ for all $i \in [d]$). We therefore have $|B_d| = 2^d d!$.
\end{fact}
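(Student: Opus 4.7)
The plan is to verify the two inclusions $T(\mathbb{Z}^d) \rtimes B_d \leq \Aut(\mathbb{L}^d)$ and $\Aut(\mathbb{L}^d) \leq T(\mathbb{Z}^d) \rtimes B_d$ separately. The easy direction is a direct check: every integer translation is a graph automorphism of $\mathbb{L}^d$, and every $\sigma \in B_d$ sends an edge $\{x, x+e_i\}$ to $\{\sigma(x), \sigma(x)+\sigma(e_i)\}$, which is again an edge since $\sigma(e_i) \in \{\pm e_j : j \in [d]\}$. The identity $\sigma \circ t_v \circ \sigma^{-1} = t_{\sigma(v)}$ shows that $T(\mathbb{Z}^d)$ is normalised by $B_d$; combined with $T(\mathbb{Z}^d) \cap B_d = \{\Id\}$ (as every element of $B_d$ fixes the origin), this realises the subgroup generated by $T(\mathbb{Z}^d)$ and $B_d$ as the internal semidirect product $T(\mathbb{Z}^d) \rtimes B_d$, and simultaneously establishes the equivalence of the three displayed descriptions.

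For the reverse inclusion, let $\phi \in \Aut(\mathbb{L}^d)$; composing with $t_{-\phi(0)}$, I may assume $\phi(0) = 0$, so that $\phi$ restricts to a permutation of $\Gamma(0) = \{\pm e_i : i \in [d]\}$. The decisive observation is that the antipodal pairs $\{e_i, -e_i\}$ are characterised graph-theoretically: a short calculation shows that $e_i$ and $-e_i$ have $0$ as their unique common neighbour in $\mathbb{L}^d$, whereas $\pm e_i$ and $\pm e_j$ with $i \neq j$ always have exactly two common neighbours (one being $0$, the other a vertex of the form $\pm e_i \pm e_j$). Hence $\phi$ must map antipodal pairs to antipodal pairs, and the unique linear map $\sigma$ on $\mathbb{R}^d$ agreeing with $\phi$ on $\{\pm e_i\}$ therefore lies in $B_d$. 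It remains to show that $\psi := \sigma^{-1} \circ \phi$, which pointwise fixes $\{0\} \cup \{\pm e_i\}$, is the identity.

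I would prove this last assertion by induction on $r = \|v\|_1 = d_{\mathbb{L}^d}(0, v)$, the cases $r \leq 1$ being immediate. For the inductive step at level $r+1 \geq 2$, I split into two subcases. If $v$ has two distinct nonzero coordinates $v_i, v_j$, set $u := v - \mathrm{sgn}(v_i) e_i$ and $u' := v - \mathrm{sgn}(v_j) e_j$: both vertices lie at $\ell^1$-distance $r$ from $0$, and a direct computation shows their only common neighbours in $\mathbb{L}^d$ are $v$ itself and $v - \mathrm{sgn}(v_i) e_i - \mathrm{sgn}(v_j) e_j$, the latter of $\ell^1$-norm $r-1$; the inductive hypothesis fixes $u$, $u'$, and this second common neighbour, so bijectivity of $\psi$ forces $\psi(v) = v$. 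In the remaining case $v = n e_i$ with $|n| = r+1$, the unique neighbour of $v$ at $\ell^1$-distance $r$ from $0$ is $u := (n - \mathrm{sgn}(n)) e_i$, and every other neighbour of $u$ either has $\ell^1$-norm at most $r$ or has $\ell^1$-norm $r+1$ with two nonzero coordinates; all such vertices are already fixed by $\psi$ (the former by the inductive hypothesis, the latter by the subcase just handled), so bijectivity again forces $\psi(v) = v$. The main obstacle, once the clean common-neighbour count has supplied the $B_d$-factor, is precisely this two-stage organisation: the single-coordinate vertices $ne_i$ must be handled only after the two-coordinate vertices at the same level, as their treatment depends on the latter.
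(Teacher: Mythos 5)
Your proof is correct and complete. Note, however, that the paper states this as a background ``Fact'' in Section 2 and offers no proof at all (it is treated as a standard, easily checked property of $\mathbb{L}^d$), so there is no argument of the authors' to compare yours against. Your two key steps are both sound: the common-neighbour count (antipodal pairs $\{e_i,-e_i\}$ meet only at $0$, while non-antipodal pairs of neighbours of $0$ have exactly two common neighbours) correctly pins down the $B_d$-factor, and this is exactly the ``opposite across $u$'' structure the paper itself exploits later in Observation \ref{observation:opposite} and Lemma \ref{lemma:condition}; and your induction on $\|v\|_1$, with the two-coordinate vertices at level $r+1$ handled before the vertices $ne_i$, correctly forces $\sigma^{-1}\circ\phi=\Id$ by injectivity on neighbourhoods. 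The semidirect-product bookkeeping ($\sigma t_v\sigma^{-1}=t_{\sigma(v)}$ and trivial intersection) is also right, so the argument would serve as a self-contained verification of the Fact.
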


\begin{fact}
\label{fact:embedding-isometries}
It is easy to see that every element of $\Aut(\mathbb{L}^d)$ can be uniquely extended to an element of $\Isom(\mathbb{R}^d)$. We can therefore view $\Aut(\mathbb{L}^d)$ as a subgroup of $\Isom(\mathbb{R}^d)$.
\end{fact}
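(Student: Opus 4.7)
The plan is to exploit the semidirect product decomposition $\Aut(\mathbb{L}^d) = T(\mathbb{Z}^d) \rtimes B_d$ furnished by Fact \ref{fact:aut-ld}, and to build the extension factor by factor. Given $\phi \in \Aut(\mathbb{L}^d)$, write $\phi = t \circ \sigma$ uniquely with $t \in T(\mathbb{Z}^d)$ and $\sigma \in B_d$. The translation $t: x \mapsto x + v$ by some $v \in \mathbb{Z}^d$ already makes perfect sense as an element of $T(\mathbb{R}^d) \leq \Isom(\mathbb{R}^d)$, and $\sigma \in B_d$ is already by definition an element of $\GL(\mathbb{R}^d)$; moreover $\sigma$ is orthogonal, since it permutes the orthonormal frame $\{\pm e_i : i \in [d]\}$. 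Setting $\tilde{\phi} := t \circ \sigma$, interpreted now in $\Isom(\mathbb{R}^d) = T(\mathbb{R}^d) \rtimes O(d)$, therefore produces an isometry of $\mathbb{R}^d$ whose restriction to $\mathbb{Z}^d$ is manifestly $\phi$.

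Next I would check that $\phi \mapsto \tilde{\phi}$ is an injective group homomorphism $\Aut(\mathbb{L}^d) \to \Isom(\mathbb{R}^d)$. Well-definedness and injectivity both follow at once from the uniqueness of the decomposition $\phi = t \circ \sigma$ in Fact \ref{fact:aut-ld}, together with the fact that the natural inclusions $T(\mathbb{Z}^d) \hookrightarrow T(\mathbb{R}^d)$ and $B_d \hookrightarrow O(d)$ are injective. To verify the homomorphism property, it suffices to check that the two semidirect-product structures are compatible: conjugation of the translation $t_v$ by $\sigma \in B_d$ yields $t_{\sigma(v)}$, and this identity holds verbatim whether $\sigma$ is viewed as acting on $\mathbb{Z}^d$ or on $\mathbb{R}^d$, because $B_d \leq O(d)$ acts on $\mathbb{Z}^d$ by the restriction of its natural linear action on $\mathbb{R}^d$. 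Composition in $\Aut(\mathbb{L}^d)$ thus matches composition in $\Isom(\mathbb{R}^d)$ under $\phi \mapsto \tilde\phi$.

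Finally, uniqueness of the extension follows from the standard observation that an isometry of $\mathbb{R}^d$ is determined by its values on any affinely independent $(d+1)$-point subset. The set $\{0, e_1, \ldots, e_d\} \subset \mathbb{Z}^d$ is such a subset, so any two isometries of $\mathbb{R}^d$ that agree on $\mathbb{Z}^d$ must coincide; in particular $\tilde{\phi}$ is the unique element of $\Isom(\mathbb{R}^d)$ extending $\phi$. The injective homomorphism of the previous paragraph then realises $\Aut(\mathbb{L}^d)$ as a subgroup of $\Isom(\mathbb{R}^d)$, as desired. The statement is essentially bookkeeping on top of Fact \ref{fact:aut-ld}, so I do not expect a serious obstacle; the only conceptual point is the compatibility of the two semidirect-product structures, and this is immediate from the observation above about the $B_d$-action.
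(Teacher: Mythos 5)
Your argument is correct and is exactly what the paper intends: the paper offers no proof beyond the remark that the statement "follows from Fact \ref{fact:aut-ld}", and your extension of $\phi = t \circ \sigma$ factor by factor, the compatibility of the two semidirect-product structures, and the uniqueness via agreement on the affinely independent set $\{0,e_1,\ldots,e_d\}$ fill in precisely the bookkeeping that remark elides. No issues.
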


\begin{definition}
If $X$ is a topological space, and $\Gamma$ is a group acting on $X$, the {\em orbit space} $X/ \Gamma$ is the (topological) quotient space $X/ \sim$, where $x \sim y$ iff $y \in \Orb_{\Gamma}(x)$, i.e. iff $x$ and $y$ are in the same $\Gamma$-orbit.
\end{definition}

\begin{definition}
If $X$ is a topological space, a group $\Gamma$ of homeomorphisms of $X$ is said to be {\em discrete} if the relative topology on $\Gamma$ (induced by the compact open topology on the group of all homeomorphisms of $X$) is the discrete topology.
\end{definition}

\begin{definition}
If $X$ is a topological space, and $\Gamma$ is a discrete group of homeomorphisms of $X$, we say that $\Gamma$ acts {\em properly discontinuously} on $X$ if for any $x,y \in X$, there exist open neighbourhoods $U$ of $x$ and $V$ of $y$ such that $|\{\gamma \in \Gamma:\ \gamma(U) \cap V \neq \emptyset\}| < \infty$.
\end{definition}

\begin{fact}
\label{fact:discrete}
If $\Gamma \leq \Isom(\mathbb{R}^d)$, then $\Gamma$ is discrete if and only if for any $x \in \mathbb{R}^d$, the orbit $\{\gamma(x):\ \gamma \in \Gamma\}$ is a discrete subset of $\mathbb{R}^d$. Hence, $\Aut(\mathbb{L}^d)$ is a discrete subgroup of $\Isom(\mathbb{R}^d)$.
\end{fact}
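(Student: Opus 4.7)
The plan is to prove the equivalence by exploiting the semidirect product decomposition $\Isom(\mathbb{R}^d) = T(\mathbb{R}^d) \rtimes O(d)$ together with the compactness of $O(d)$, and then to deduce that $\Aut(\mathbb{L}^d)$ is discrete essentially by inspection. Throughout, I would write each $\gamma \in \Isom(\mathbb{R}^d)$ as $\gamma = t_\gamma \circ \sigma_\gamma$ with $\sigma_\gamma \in O(d)$ and $t_\gamma$ translation by a vector $v_\gamma$. I would use the standard observation that a subgroup of the (metrizable) topological group $\Isom(\mathbb{R}^d)$ is discrete if and only if the identity is isolated in it.

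For the direction `discrete orbits $\Rightarrow$ discrete group', I would argue contrapositively. Non-discreteness of $\Gamma$ yields a sequence of \emph{distinct} $\gamma_n \in \Gamma \setminus \{\Id\}$ with $\gamma_n \to \Id$, whence $\sigma_{\gamma_n} \to I$ and $v_{\gamma_n} \to 0$. Fix $d+1$ affinely independent points $x_0, \ldots, x_d \in \mathbb{R}^d$; since the only isometry of $\mathbb{R}^d$ fixing $d+1$ affinely independent points is $\Id$, each $\gamma_n$ fails to fix some $x_{i(n)}$, and the pigeonhole principle produces an index $i$ with $\gamma_n(x_i) \neq x_i$ for infinitely many $n$. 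Along this subsequence, $\gamma_n(x_i) \to x_i$ while each $\gamma_n(x_i) \neq x_i$; the values $\gamma_n(x_i)$ cannot be drawn from a finite set (any repeated value $y \neq x_i$ would prevent convergence to $x_i$), so $\Orb_\Gamma(x_i)$ has $x_i$ as a limit point and is not discrete in $\mathbb{R}^d$.

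For the reverse direction, if some orbit $\Orb_\Gamma(x)$ has a limit point $y$, I would choose distinct $\gamma_n \in \Gamma$ with $\gamma_n(x) \to y$, pass to a subsequence along which $\sigma_{\gamma_n} \to \sigma$ (using compactness of $O(d)$), and observe that $v_{\gamma_n} = \gamma_n(x) - \sigma_{\gamma_n}(x) \to y - \sigma(x)$; hence $\gamma_n$ converges in $\Isom(\mathbb{R}^d)$. The $\gamma_n$ are distinct (since the $\gamma_n(x)$ are), so $\Gamma$ contains a convergent sequence of distinct elements and is not discrete. Finally, to conclude that $\Aut(\mathbb{L}^d)$ is discrete, I would apply the equivalence: by Fact~\ref{fact:aut-ld}, any orbit of $\Aut(\mathbb{L}^d)$ on $\mathbb{R}^d$ equals $\bigcup_{\sigma \in B_d}(\sigma(x) + \mathbb{Z}^d)$, a finite union of translates of the closed discrete set $\mathbb{Z}^d$, and is therefore discrete in $\mathbb{R}^d$. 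The main obstacle in the whole argument is the fixed-point handling in the first direction: the affine-independence pigeonhole is what rules out the pathological scenario in which all $\gamma_n$ happen to fix a single common point while tending to $\Id$.
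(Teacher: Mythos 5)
Your argument is correct. Note first that the paper does not actually prove this statement: it is listed as a background ``Fact'' (with a suppressed reference to Charlap's book), so there is no in-paper proof to compare against, and a self-contained argument like yours is a genuine addition rather than a variant of the authors' route. Both of your directions are sound. In the forward direction the affine-independence pigeonhole correctly disposes of the fixed-point issue, and the observation that the values $\gamma_n(x_i)$ cannot be drawn from a finite set is exactly what is needed to produce infinitely many orbit points accumulating at $x_i$, which does lie in the orbit since $\Id\in\Gamma$, so the orbit is genuinely non-discrete. In the reverse direction the compactness of $O(d)$ together with $v_{\gamma_n}=\gamma_n(x)-\sigma_{\gamma_n}(x)$ correctly yields a convergent subsequence in $\Isom(\mathbb{R}^d)$. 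The one step you should make explicit is the final inference there: a convergent sequence of \emph{distinct} elements $\gamma_n\to g$ does not by itself contradict discreteness of an arbitrary subset, since the limit $g$ need not lie in $\Gamma$ (compare $\{1/n\}$ in $\mathbb{R}$); you need the group structure, e.g. $\gamma_{n+1}^{-1}\gamma_n\to\Id$ with $\gamma_{n+1}^{-1}\gamma_n\neq\Id$, which shows the identity is not isolated and so connects back to the criterion you announced at the outset. Finally, your computation of the $\Aut(\mathbb{L}^d)$-orbits as finite unions of translates of the \emph{closed} discrete set $\mathbb{Z}^d$ is correct, and the word ``closed'' is doing real work, since a finite union of merely discrete sets need not be discrete.
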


\begin{fact}
\label{fact:proper-disc}
If $\Gamma \leq \Isom(\mathbb{R}^d)$ is discrete, then $\Gamma$ acts properly discontinuously on $\mathbb{R}^d$. (Note that it is clear directly from the definition that $\Aut(\mathbb{L}^d)$, and any subgroup thereof, acts properly discontinuously on $\mathbb{R}^d$.)
\end{fact}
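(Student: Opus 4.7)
The plan is to use the semidirect decomposition $\Isom(\mathbb{R}^d)=T(\mathbb{R}^d)\rtimes O(d)$ from Fact 1, together with compactness of $O(d)$, to convert discreteness of $\Gamma$ into the properly discontinuous property. Given $x,y\in\mathbb{R}^d$, I would take the open neighbourhoods $U=B(x,1)$ and $V=B(y,1)$ and aim to show that the set $S:=\{\gamma\in\Gamma:\gamma(U)\cap V\neq\emptyset\}$ is finite.

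The first step is an immediate reduction: since each $\gamma\in\Gamma$ is an isometry, $\gamma(U)=B(\gamma(x),1)$, so $\gamma(U)\cap V\neq\emptyset$ forces $\|\gamma(x)-y\|<2$. Hence $S$ is contained in $T:=\{\gamma\in\Gamma:\|\gamma(x)-y\|\leq 2\}$, and it suffices to prove $T$ is finite. Writing each $\gamma\in T$ as $\gamma=t_{b_\gamma}\circ\sigma_\gamma$ with $b_\gamma\in\mathbb{R}^d$ and $\sigma_\gamma\in O(d)$, and using that $\sigma_\gamma$ preserves norms, one obtains the uniform bound $\|b_\gamma\|\leq \|x\|+\|y\|+2$ for all $\gamma\in T$; that is, the translation parts of elements of $T$ are uniformly bounded.

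The heart of the argument is then a compactness extraction: if $T$ were infinite, I would choose a sequence of pairwise distinct elements $\gamma_n\in T$; by compactness of $O(d)$ together with Bolzano--Weierstrass applied to $\{b_{\gamma_n}\}\subset\mathbb{R}^d$, one can pass to a subsequence along which both $\sigma_{\gamma_n}$ and $b_{\gamma_n}$ converge, so $\gamma_n$ converges in $\Isom(\mathbb{R}^d)$ (equipped with the natural topology coming from the identification with $\mathbb{R}^d\times O(d)$, which agrees with the compact-open topology used in Fact \ref{fact:discrete}). Consequently $\gamma_n^{-1}\gamma_{n+1}$ is a sequence of non-identity elements of $\Gamma$ converging to $\Id$, contradicting discreteness of $\Gamma$. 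The only mild subtlety is ensuring that the two descriptions of the topology on $\Isom(\mathbb{R}^d)$ are consistent, but this is routine; the real content of the proof is the interaction between the compactness of the orthogonal part $O(d)$ and the discreteness of $\Gamma$.
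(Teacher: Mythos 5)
Your argument is correct. Note, however, that the paper does not prove this statement at all: it is listed as a background ``Fact'' (the standard reference being Chapter 1 of Charlap's book on Bieberbach groups), with only the parenthetical remark that for $\Aut(\Ld)$ and its subgroups the conclusion is immediate from the definition. So there is no proof in the paper to compare against; what you have supplied is the standard argument, and it is sound. The reduction from $\gamma(U)\cap V\neq\emptyset$ to the bound $\|\gamma(x)-y\|<2$ is right, the estimate $\|b_\gamma\|\leq\|x\|+\|y\|+2$ on the translation parts follows correctly from orthogonality of $\sigma_\gamma$, and the compactness extraction (Bolzano--Weierstrass on the bounded translation parts together with compactness of $O(d)$) does produce a convergent subsequence, whence $\gamma_n^{-1}\gamma_{n+1}$ is a sequence of non-identity elements of $\Gamma$ tending to $\Id$, contradicting discreteness. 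The one point you flag as routine --- that on $\Isom(\mathbb{R}^d)$ the compact-open topology coincides with the product topology under the identification with $\mathbb{R}^d\times O(d)$, and that this makes $\Isom(\mathbb{R}^d)$ a topological group so that $\gamma_n\to\gamma$ implies $\gamma_n^{-1}\gamma_{n+1}\to\Id$ --- is indeed routine, but it is the one step worth writing out if this were to appear in full, since the paper's definition of discreteness is phrased in terms of the compact-open topology.
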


\begin{definition}
Let $\Gamma$ be a discrete subgroup of $\Isom(\mathbb{R}^d)$. The {\em translation subgroup} $T_{\Gamma}$ of $\Gamma$ is the subgroup of all translations in $\Gamma$. The {\em lattice of translations} of $\Gamma$ is the lattice $L_{\Gamma} := \{\gamma(0):\ \gamma \in T_{\Gamma}\} \subset \mathbb{R}^d$. We have $L_{\Gamma} \cong \mathbb{Z}^r$ for some $r \in \{0,1,\ldots,d\}$; the integer $r$ is called the {\em rank} of the lattice $L_{\Gamma}$.
\end{definition}

\begin{definition}
\label{definition:crystallographic}
A discrete subgroup $\Gamma \leq \Isom(\mathbb{R}^d)$ is said to be a {\em $d$-dimensional crystallographic group} if its lattice of translations has rank $d$ (or, equivalently, if the orbit space $\mathbb{R}^d/\Gamma$ is compact).
\end{definition}

\begin{definition}
A group $\Gamma$ is said to be {\em torsion-free} if the only element of finite order in $\Gamma$ is the identity.
\end{definition}

\begin{definition}(Following \cite{charlap}.)
\label{definition:bieberbach}
A torsion-free $d$-dimensional crystallographic group is called a {\em $d$-dimensional Bieberbach group}.
\end{definition}

\begin{definition}
If $\Gamma$ is a $d$-dimensional crystallographic group, its {\em point group} $P_{\Gamma}$ is defined by
 $$P_{\Gamma} = \{\sigma \in O(d):\ t \circ \sigma \in \Gamma \textrm{ for some }t \in T(\mathbb{R}^d)\}.$$
 \end{definition}

\begin{fact} (Bieberbach's Theorems.)
\label{fact:bieberbach}
Bieberbach's First Theorem \cite{bieberbach-1911} implies that if $\Gamma$ is a $d$-dimensional crystallographic group, then $P_{\Gamma}$ is finite. It is easy to check that if $\Gamma$ is a $d$-dimensional crystallographic group, then its point group $P_{\Gamma}$ acts faithfully on the lattice $L_{\Gamma}$. Hence, $P_{\Gamma}$ is isomorphic to a finite subgroup of $GL_d(\mathbb{Z})$. The Jordan-Zassenhaus theorem implies that for any (fixed) $d \in \mathbb{N}$, there are only finitely many isomorphism-classes of finite subgroups of $\GL_d(\mathbb{Z})$. It follows that for any $d \in \mathbb{N}$, there are only finitely many possibilities for the isomorphism class of the point-group of a $d$-dimensional crystallographic group. Bieberbach's Third Theorem \cite{bieberbach-1912} says more: for any $d \in \mathbb{N}$, there are only finitely many possibilities for the isomorphism class\footnote{Meaning, as usual, an isomorphism class of abstract groups.} of a $d$-dimensional crystallographic group. Bieberbach's Second Theorem \cite{bieberbach-1912} states that if $\Gamma$ and $\Gamma'$ are $d$-dimensional crystallographic groups, and $\Phi:\Gamma \to \Gamma'$ is an isomorphism, then there exists an affine transformation $\alpha$ of $\mathbb{R}^d$ such that $\Phi(\gamma) = \alpha^{-1} \gamma \alpha$ for all $\gamma \in \Gamma$. Hence, two $d$-dimensional crystallographic groups are isomorphic if and only if they are conjugate in $\Aff(\mathbb{R}^d)$, the group of all affine transformations of $\mathbb{R}^d$. It follows that for any fixed $d \in \mathbb{N}$, there are only a finite number ($f(d)$, say) of possibilities for the affine-conjugacy class of a $d$-dimensional crystallographic group.
\end{fact}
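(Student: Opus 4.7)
The plan is to derive each clause of the statement by combining Bieberbach's three theorems with some elementary linear algebra, with the Bieberbach theorems themselves proved along classical lines. Starting with the assertion that the point group $P_\Gamma$ is finite (Bieberbach's First Theorem), I would run the standard commutator/compactness argument. Writing $\Isom(\mathbb{R}^d) = T(\mathbb{R}^d) \rtimes O(d)$, every $\gamma \in \Gamma$ decomposes as $\gamma = (v_\gamma,\sigma_\gamma)$, and the commutator $[\gamma_1,\gamma_2]$ has rotational part $[\sigma_1,\sigma_2]$. If $P_\Gamma$ were infinite, compactness of $O(d)$ would supply elements of $\Gamma$ with distinct rotational parts arbitrarily close; iterating the commutator construction yields a sequence of non-identity elements of $\Gamma$ whose rotational parts tend to $\Id$, and a further (Zassenhaus-neighbourhood-style) manipulation produces non-identity elements whose translational parts accumulate in a bounded region, contradicting the discreteness guaranteed by Fact~\ref{fact:discrete}.

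Given that $P_\Gamma$ is finite, I would next verify directly that conjugation in $\Isom(\mathbb{R}^d)$ sends a pure translation $(u,\Id)$ to $(\sigma u,\Id)$; consequently $T_\Gamma$ is normal in $\Gamma$ and $P_\Gamma$ acts on $L_\Gamma$ by the restriction of its linear action on $\mathbb{R}^d$. Faithfulness is immediate since $L_\Gamma$ has rank $d$ and so contains an $\mathbb{R}$-basis of $\mathbb{R}^d$. Hence $P_\Gamma$ embeds as a finite subgroup of $\GL_d(\mathbb{Z})$. I would then quote Jordan--Zassenhaus as a black box to conclude that $\GL_d(\mathbb{Z})$ contains only finitely many finite subgroups up to conjugacy, yielding the finiteness of the isomorphism class of $P_\Gamma$. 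For Bieberbach's Third Theorem, the plan is to treat $\Gamma$ as an extension $1 \to L_\Gamma \to \Gamma \to P_\Gamma \to 1$: its abstract isomorphism class is determined by (i) the $P_\Gamma$-module structure on $L_\Gamma \cong \mathbb{Z}^d$, which has only finitely many possibilities by the preceding step, and (ii) a class in $H^2(P_\Gamma,L_\Gamma)$, which is a finite group because $P_\Gamma$ is finite and $L_\Gamma$ is finitely generated.

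For Bieberbach's Second Theorem, the key is a group-theoretic characterisation of $T_\Gamma$ inside $\Gamma$ (for instance, as the unique maximal normal free-abelian subgroup of rank $d$), which forces any abstract isomorphism $\Phi:\Gamma \to \Gamma'$ to restrict to an isomorphism $L_\Gamma \to L_{\Gamma'}$; this extends uniquely to an $\mathbb{R}$-linear automorphism $A$ of $\mathbb{R}^d$, and choosing $b$ so that $x \mapsto Ax+b$ realises $\Phi$ on a chosen basepoint gives the required affine transformation $\alpha$. Combining Bieberbach II with Bieberbach III then upgrades finiteness of abstract isomorphism classes to finiteness of affine-conjugacy classes, establishing the last sentence of the statement. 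The main obstacle I expect is Bieberbach's First Theorem: one has to balance the compactness of $O(d)$ against the discreteness of $\Gamma$ and extract useful translational information from commutators involving rotational parts. A clean treatment essentially hinges on the Zassenhaus neighbourhood lemma for discrete linear groups, which I would invoke rather than re-derive in this setting.
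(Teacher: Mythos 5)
The paper does not actually prove Bieberbach's theorems or the Jordan--Zassenhaus theorem here: Fact~\ref{fact:bieberbach} is a background statement whose ``proof'' consists only of the citations \cite{bieberbach-1911}, \cite{bieberbach-1912} and the short deduction chain (point group finite $\Rightarrow$ faithful action on the rank-$d$ lattice $L_\Gamma$ $\Rightarrow$ embedding of $P_\Gamma$ into $\GL_d(\mathbb{Z})$ $\Rightarrow$ finitely many point groups by Jordan--Zassenhaus; Bieberbach III for finitely many abstract isomorphism classes; Bieberbach II to upgrade isomorphism to affine conjugacy and hence get finitely many affine-conjugacy classes). Your deduction chain is exactly the paper's, and the only part the paper argues itself --- that $P_\Gamma$ acts faithfully on $L_\Gamma$ because $L_\Gamma$ has rank $d$ and so spans $\mathbb{R}^d$, after noting that conjugation carries $(u,\Id)$ to $(\sigma u,\Id)$ so that $T_\Gamma$ is normal --- appears correctly in your write-up. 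Beyond that you go further than the paper by sketching proofs of the cited theorems: the commutator/compactness argument with a Zassenhaus-neighbourhood lemma for Bieberbach I, the extension $1 \to L_\Gamma \to \Gamma \to P_\Gamma \to 1$ together with finiteness of $H^2(P_\Gamma,L_\Gamma)$ for Bieberbach III, and the characterisation of $T_\Gamma$ as the unique maximal (normal) abelian subgroup for Bieberbach II. These sketches follow the classical treatment (essentially as in \cite{charlap}, which the paper cites elsewhere) and are sound in outline; what the paper's approach buys is brevity, since for its purposes only the finiteness of $f(d)$ is needed, while your approach buys self-containedness at the cost of having to carry out the genuinely delicate analysis in Bieberbach I, which you rightly identify as the main obstacle and defer to the Zassenhaus neighbourhood lemma.
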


\begin{definition}($d$-dimensional topological manifold.)

Let $d \in \mathbb{N}$. A Hausdorff topological space $X$ is said to be a {\em $d$-dimensional topological manifold} if for every $x \in X$, there exists an open neighbourhood of $x$ which is homeomorphic to an open subset of $\mathbb{R}^d$. (Note that we do not regard a `manifold with boundary' as a manifold.)
\end{definition}

\begin{definition}($d$-dimensional topological orbifold, following \cite{lectures-on-orbifolds}.)
\label{defn:orbifold}
\label{page:orbifold-definition}

Let $d \in \mathbb{N}$ be fixed. Let $X$ be a Hausdorff topological space. An {\em orbifold chart} on $X$ is a 4-tuple $(V,G,U,\pi)$, where
\begin{itemize}
\item $V$ is an open subset of $\mathbb{R}^d$;
\item $U$ is an open subset of $X$;
\item $G$ is a finite group of homeomorphisms of $V$;
\item $\pi = \phi \circ q$, where $q: V \to V/G$ is the orbit map (i.e. the map taking $v \in V$ to its orbit), and $\phi:V/G \to U$ is a homeomorphism.
\end{itemize}
We say that two orbifold charts $(V_1,G_1,U_1,\pi_1),\ (V_2,G_2,U_2,\pi_2)$ on $X$ are {\em compatible} if for any $v_1 \in V_1,\ v_2 \in V_2$ with $\pi_1(v_1) = \pi_2(v_2)$, there exist open neighbourhoods $W_i$ of $v_i$ in $V_i$ (for $i=1,2$), and a homeomorphism $h:W_2 \to W_1$, such that $\pi_2 | W_2 = (\pi_1 |W_1) \circ h$.

An {\em atlas of orbifold charts on $X$} is a collection $\{(V_i,G_i,U_i,\pi_i):\ i \in I\}$ of pairwise compatible orbifold charts on $X$ such that $\{U_i:\ i \in I\}$ is a cover of $X$. A {\em $d$-dimensional topological orbifold} is a pair $(X,\mathcal{A})$, where $X$ is a topological space, and $\mathcal{A}$ is an atlas of orbifold charts on $X$. (Abusing terminology slightly, if $(X,\mathcal{A})$ is a topological orbifold, we will sometimes refer to the underlying topological space $X$ as a topological orbifold.)
\end{definition}

Note that, for simplicity, all orbifolds (and manifolds) in this paper will be viewed only as topological ones. An `orbifold' (resp. `manifold') will therefore always mean a topological orbifold (resp. manifold).

\begin{remark}
Informally, a $d$-dimensional topological orbifold is a topological space, together with a collection of charts which model it locally using quotients of open subsets of $\mathbb{R}^d$ under the actions of finite groups (rather than simply using open subsets of $\mathbb{R}^d$, as in the definition of a manifold). Note that a $d$-dimensional manifold is precisely a $d$-dimensional orbifold where we can take each finite group $G_i$ in Definition \ref{defn:orbifold} to be the trivial group.
\end{remark}

\begin{example}
The orbit space $\mathbb{R}^2/\langle (x_1,x_2) \mapsto (x_1,-x_2) \rangle$ (which is homeomorphic to the closed upper half-plane) can be given the structure of a 2-dimensional orbifold. More generally, any `manifold with boundary' can be given the structure of an orbifold. Even more generally, if $M$ is a manifold, and $\Gamma$ is a finite group of homeomorphisms of $M$, then the orbit space $M/\Gamma$ can be given the structure of an orbifold.
\end{example}

\begin{fact}
\label{fact:proper-orbifold}(See \cite{orbifolds}, Proposition 13.2.1.)
If $\Gamma$ is a discrete group of homeomorphisms of a $d$-dimensional topological manifold $M$, and $\Gamma$ acts properly discontinuously on $M$, then the orbit space $M/\Gamma$ can be given the structure of a $d$-dimensional topological orbifold.
Indeed, for each $x \in M$, we can take a chart at $x$ where the finite group of homeomorphisms $G_x$ is isomorphic to $\Stab_{\Gamma}(x)$. If, in addition, $\Gamma$ is torsion-free, then $\Stab_{\Gamma}(x)$ is trivial for all $x$, so $M/\Gamma$ is a $d$-dimensional topological manifold.
\end{fact}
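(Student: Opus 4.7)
The plan is to build, for each point of $M/\Gamma$, a single orbifold chart of the form $(V_x, G_x, U_x^*, \pi_x)$ modelled on the quotient of an open set in $\mathbb{R}^d$ by the finite stabiliser $S := \Stab_{\Gamma}(x)$, and then to verify that these charts cover $M/\Gamma$ and are pairwise compatible. The whole construction hinges on finding, for each $x \in M$, a suitable \emph{slice} neighbourhood $U_x$ of $x$ satisfying: (i) $\gamma(U_x) = U_x$ for all $\gamma \in S$, and (ii) $\gamma(U_x) \cap U_x = \emptyset$ for all $\gamma \in \Gamma \setminus S$. To produce this slice, first apply proper discontinuity to the pair $(x,x)$ to obtain an open neighbourhood $U$ of $x$ for which $F := \{\gamma \in \Gamma : \gamma(U) \cap U \neq \emptyset\}$ is finite; in particular $S \subseteq F$, so $S$ is finite. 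For each of the finitely many $\gamma \in F \setminus S$ we have $\gamma(x) \neq x$, and Hausdorffness allows us to shrink $U$ to a smaller neighbourhood $W$ of $x$ that is disjoint from every $\gamma(W)$ with $\gamma \in \Gamma \setminus S$. Finally replace $W$ by $U_x := \bigcap_{\sigma \in S} \sigma(W)$: as a finite intersection of open sets this is open, it still contains $x$, it is $S$-invariant by construction, and it still satisfies (ii) because $U_x \subseteq W$.

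With the slice in hand, shrink $U_x$ further inside a coordinate chart of $M$ (and re-symmetrise by $S$ as above) so that $U_x$ is homeomorphic to an open subset $V_x \subseteq \mathbb{R}^d$ via some homeomorphism $h_x : U_x \to V_x$, and set $G_x := h_x S h_x^{-1}$, a finite group of homeomorphisms of $V_x$. Let $q : M \to M/\Gamma$ denote the orbit map; it is an open map, since for any open $A \subseteq M$ we have $q^{-1}(q(A)) = \bigcup_{\gamma \in \Gamma} \gamma(A)$. Hence $U_x^* := q(U_x)$ is open in $M/\Gamma$. The map $U_x/S \to U_x^*$ induced by $q$ is a homeomorphism: surjectivity is immediate, while injectivity follows since $q(a) = q(b)$ with $a,b \in U_x$ forces $b = \gamma(a)$ with $\gamma(U_x) \cap U_x \neq \emptyset$, hence $\gamma \in S$ by (ii); continuity and openness are standard facts about quotient maps. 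Composing the orbit map $V_x \to V_x/G_x$ with the induced homeomorphisms $V_x/G_x \to U_x/S \to U_x^*$ gives the required $\pi_x$, so $(V_x, G_x, U_x^*, \pi_x)$ is an orbifold chart.

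For pairwise compatibility of two such charts built at $x_1, x_2$, suppose $v_i \in V_i$ with $\pi_1(v_1) = \pi_2(v_2)$; then there exists $\gamma \in \Gamma$ with $\gamma(h_1^{-1}(v_1)) = h_2^{-1}(v_2)$. Shrinking the neighbourhood $W_1$ of $v_1$ in $V_1$ if necessary so that $\gamma$ carries its preimage into $U_{x_2}$, and setting $W_2 := h_2(\gamma(h_1^{-1}(W_1)))$, the homeomorphism $h := h_1 \circ \gamma^{-1} \circ h_2^{-1} : W_2 \to W_1$ serves as the required transition map, since $\pi_i = q \circ h_i^{-1}$ gives $\pi_2 | W_2 = (\pi_1 | W_1) \circ h$. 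The sets $\{U_x^* : x \in M\}$ trivially cover $M/\Gamma$, so the collection forms an orbifold atlas and $M/\Gamma$ is a $d$-dimensional topological orbifold. For the torsion-free refinement, observe that each $\Stab_{\Gamma}(x)$ is finite (from the slice construction) and a finite subgroup of a torsion-free group is trivial, so every $G_x$ is trivial and $M/\Gamma$ is locally homeomorphic to an open subset of $\mathbb{R}^d$, i.e.\ a manifold.

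The main obstacle is the slice construction: obtaining a $U_x$ which is simultaneously $S$-invariant, disjoint from its $(\Gamma \setminus S)$-translates, and small enough to sit inside a coordinate chart of $M$ requires combining proper discontinuity, Hausdorffness and a symmetrisation over the finite group $S$. Everything afterwards is essentially a packaging argument using standard properties of the orbit map, plus the observation that finite subgroups of torsion-free groups are trivial for the manifold refinement.
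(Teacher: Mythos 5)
Your argument is correct and is essentially the paper's own (the paper merely cites Thurston and sketches exactly the slice neighbourhood $U_x$ with $\gamma(U_x)=U_x$ for $\gamma\in\Stab_{\Gamma}(x)$ and $\gamma(U_x)\cap U_x=\emptyset$ otherwise, which your symmetrisation $U_x=\bigcap_{\sigma\in S}\sigma(W)$ constructs in full). The only detail left implicit is that $M/\Gamma$ is Hausdorff, which the paper's Definition \ref{defn:orbifold} requires of the underlying space; this follows by the same separation argument you use for the slice, applied to a pair of points in distinct orbits via proper discontinuity.
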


\begin{fact}
It follows from Facts \ref{fact:proper-disc} and \ref{fact:proper-orbifold} that if $\Gamma \leq \Isom(\mathbb{R}^d)$ is discrete, then $\mathbb{R}^d/\Gamma$ is a $d$-dimensional orbifold, and if in addition, $\Gamma$ is torsion-free, then $\mathbb{R}^d/\Gamma$ is a $d$-dimensional manifold. Therefore, if $\Gamma$ is a $d$-dimensional crystallographic group, then $\mathbb{R}^d/\Gamma$ is a compact $d$-dimensional orbifold, and if $\Gamma$ is a $d$-dimensional Bieberbach group, then $\mathbb{R}^d/\Gamma$ is a compact $d$-dimensional manifold. Since there are at most $f(d)$ possibilities for the affine-conjugacy class of a $d$-dimensional crystallographic group, the orbit space $\mathbb{R}^d/\Gamma$ is homeomorphic to one of at most $f(d)$ topological spaces.
\end{fact}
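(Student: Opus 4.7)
The plan is to deduce the four assertions of the compound Fact in sequence, chaining the ingredients already established. First, suppose $\Gamma \leq \Isom(\mathbb{R}^d)$ is discrete. Fact \ref{fact:proper-disc} tells us that $\Gamma$ acts properly discontinuously on $\mathbb{R}^d$, and since $\mathbb{R}^d$ is itself a $d$-dimensional topological manifold, Fact \ref{fact:proper-orbifold} applies directly and endows $\mathbb{R}^d/\Gamma$ with the structure of a $d$-dimensional orbifold. If in addition $\Gamma$ is torsion-free, then each point-stabiliser $\Stab_\Gamma(x)$ is trivial: the group of isometries of $\mathbb{R}^d$ fixing $x$ is conjugate (via the translation sending $x$ to $0$) to $O(d)$, which is compact, so the intersection with the discrete group $\Gamma$ is finite, hence trivial by torsion-freeness. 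Consequently each orbifold chart can be taken with trivial group, so $\mathbb{R}^d/\Gamma$ is a $d$-dimensional manifold.

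For the crystallographic case, Definition \ref{definition:crystallographic} incorporates the equivalence ``lattice of translations of rank $d$'' $\Leftrightarrow$ ``$\mathbb{R}^d/\Gamma$ compact,'' so compactness of the orbifold (and, when $\Gamma$ is Bieberbach, of the manifold) is immediate. To obtain the final counting bound of $f(d)$ homeomorphism types, I would invoke Bieberbach's Second Theorem, recorded in Fact \ref{fact:bieberbach}: every abstract-group isomorphism between two $d$-dimensional crystallographic groups $\Gamma$ and $\Gamma'$ is realised by conjugation by some affine transformation $\alpha \in \Aff(\mathbb{R}^d)$, so that $\Gamma' = \alpha^{-1} \Gamma \alpha$; and only $f(d)$ such affine-conjugacy classes exist. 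Any such $\alpha$ descends to a map
$$\widetilde{\alpha}:\ \mathbb{R}^d/\Gamma \to \mathbb{R}^d/\Gamma', \qquad \Orb_\Gamma(x) \mapsto \Orb_{\Gamma'}(\alpha^{-1}(x)),$$
which is well-defined (since $\alpha^{-1}(\gamma x) = (\alpha^{-1}\gamma\alpha)(\alpha^{-1} x)$ lies in $\Orb_{\Gamma'}(\alpha^{-1}(x))$), continuous by the universal property of the quotient topology, and has continuous inverse obtained symmetrically from $\alpha$. Thus affine-conjugate crystallographic groups produce homeomorphic orbit spaces, so the orbit space $\mathbb{R}^d/\Gamma$ is homeomorphic to one of at most $f(d)$ topological spaces.

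The entire statement is essentially a bookkeeping consequence of the cited facts, and I do not anticipate a serious obstacle. The only points requiring a little care are (i) the deduction of triviality of point-stabilisers from discreteness plus torsion-freeness (where one must notice that a discrete subgroup of the compact group $O(d)$ is necessarily finite), and (ii) the well-definedness and continuity of the quotient map $\widetilde{\alpha}$; both become immediate once the right commutation relation $\alpha^{-1} \gamma = (\alpha^{-1}\gamma\alpha)\alpha^{-1}$ is written down.
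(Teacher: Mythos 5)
Your proposal is correct and follows exactly the route the paper intends: the paper states this Fact as an immediate consequence of Facts \ref{fact:proper-disc}, \ref{fact:proper-orbifold} and \ref{fact:bieberbach} without writing out details, and you have simply filled in those details (finiteness of stabilisers from proper discontinuity, triviality from torsion-freeness, compactness from the definition of crystallographic, and the descended homeomorphism induced by an affine conjugation). All the steps check out, including the well-definedness of $\widetilde{\alpha}$ via the identity $\alpha^{-1}\gamma = (\alpha^{-1}\gamma\alpha)\alpha^{-1}$.
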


\section{Proofs of `topological' structure theorems}
\begin{proof}[Proof of Theorem \ref{thm:cover-2}.]
Let $G$ be as in the statement of the theorem. Let $\mathcal{C}$ be a collection of 4-cycles of $G$ as in the definition of the 4-cycle wheel property. There is an obvious way to produce from $G$ a 2-dimensional cubical CW complex $S$, by attaching a unit square to each of the 4-cycles in $\mathcal{C}$ (and gluing these unit squares together along edges of $G$); these unit squares are the 2-cells of the complex, and the edges of $G$ are the 1-cells. Since any edge of $G$ is contained in exactly two of the 4-cycles in $\mathcal{C}$, $S$ is a complete surface, and an isomorphic copy $G'$ of $G$ is embedded in $S$, the faces of $G'$ being precisely the 2-cells of $S$.

Clearly, the surface $S$ is flat (i.e., locally isometric to $\mathbb{R}^2$); since $S$ is a complete, flat surface, its universal cover is $\mathbb{R}^2$. Let $p: \mathbb{R}^2 \to S$ be a universal covering map. Note that $p$ is a normal covering map, since it is universal. Let $H = p^{-1}(G')$ denote the pull-back of $G'$. It is well-known (and easy to see, e.g. by composing $p$ with a `developing map'), that we may take $p$ to be a local isometry --- equivalently, we may assume that $p^{-1}(F)$ is a unit square for each face $F$ of $G'$. This in turn implies that $H = p^{-1}(G')$ is a tesselation of $\mathbb{R}^2$ with unit squares, and it follows easily that $H = \mathbb{L}^2$. Hence, $p|_{V(\mathbb{L}^2)}$ is a normal covering map (in the graph sense) from $\mathbb{L}^2$ to $H$.
\end{proof}

\begin{remark}
This proof can very easily be adapted to prove a weakening of Theorem \ref{thm:cover-d}, namely that if $G$ is a connected graph which is weakly $d$-locally $\mathbb{L}^d$, then $G$ is normally covered by $\mathbb{L}^d$. Indeed, one produces from $G$ a $d$-dimensional cubical CW complex $S$ in the natural way; $S$ is a complete, flat $d$-dimensional manifold so has universal cover $\mathbb{R}^d$.
\end{remark}

For our proof of Theorem \ref{thm:cover-d}, we will need the following.

\begin{observation}
\label{observation:opposite}
Let $d \geq 2$ be an integer. Let $G$ be a graph which is weakly $2$-locally $\mathbb{L}^d$. Then $G$ is $(2d)$-regular, and for each vertex $u \in V(G)$, the set of neighbours of $u$ can be partitioned into $d$ pairs
\begin{equation}\label{eq:d-pairs}\{\{a^{(i)}_1,a^{(i)}_{2}\}:\ i \in [d]\}\end{equation}
such that for all $i \in [d]$, $a_1^{(i)}$ and $a_2^{(i)}$ have $u$ as their only common neighbour, and for each pair $1 \leq i < j \leq d$ and each $(k,l) \in [2]^2$, there are exactly two common neighbours of $a^{(i)}_k$ and $a^{(j)}_l$, namely $u$ and one other vertex $c_{\{i,j\},(k,l)}$, with the $4{d \choose 2}$ vertices
$$(c_{\{i,j\},(k,l)}:\ \{i,j\} \in [n]^{(2)},\ (k,l) \in [2]^2)$$
being distinct. Further, the partition (\ref{eq:d-pairs}) is the unique partition with these properties. For each $i \in [d]$, we say that the two vertices $a^{(i)}_1$ and $a^{(i)}_2$ are `opposite one another across $u$'.
\end{observation}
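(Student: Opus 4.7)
The plan is to transfer the local structure at $0 \in V(\mathbb{L}^d)$ to $u \in V(G)$ via the defining isomorphism $\phi: \Link_2^-(0,\mathbb{L}^d) \to \Link_2^-(u,G)$ with $\phi(0) = u$. The vertex $0$ has exactly $2d$ neighbours in $\mathbb{L}^d$, namely $\pm e_i$ for $i \in [d]$, and these partition canonically into the $d$ pairs $\{e_i,-e_i\}$; applying $\phi$ would give the corresponding partition of $\Gamma(u)$ and, in particular, the $(2d)$-regularity of $G$.

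First I would verify the stated common-neighbour properties at $0$ in $\mathbb{L}^d$ by direct computation: for each $i$, the vertices $e_i$ and $-e_i$ have only $0$ as a common neighbour; for $i \neq j$ and any signs $\epsilon_1,\epsilon_2 \in \{\pm 1\}$, the vertices $\epsilon_1 e_i$ and $\epsilon_2 e_j$ have exactly two common neighbours, namely $0$ and $\epsilon_1 e_i + \epsilon_2 e_j$; and the $4\binom{d}{2}$ corner vertices $\pm e_i \pm e_j$ (for $1 \leq i < j \leq d$) are all distinct, being pairwise distinct elements of $\mathbb{Z}^d$ of $\ell^1$-norm $2$.

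To transfer these facts to $G$, the key observation is that any common neighbour $c$ of two vertices $a,a' \in \Gamma(u)$ has $d_G(u,c) \leq 2$, and the edges $\{a,c\}$ and $\{a',c\}$ are each at distance at most $1$ from $u$; hence all such common neighbours and the incident edges lie inside $\Link_2^-(u,G)$. The analogous statement holds at $0$ in $\mathbb{L}^d$, so $\phi$ induces a bijection between the common neighbours in $\mathbb{L}^d$ of $\epsilon_k e_i$ and $\epsilon_l e_j$ and the common neighbours in $G$ of their images. Setting $a^{(i)}_1 := \phi(e_i)$, $a^{(i)}_2 := \phi(-e_i)$, and $c_{\{i,j\},(k,l)} := \phi(\epsilon_k e_i + \epsilon_l e_j)$ (with $\epsilon_1 = +1,\ \epsilon_2 = -1$) would then give the required structure in $G$.

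The uniqueness of the partition is immediate: in $\mathbb{L}^d$, among pairs of neighbours of $0$, precisely the pairs $\{e_i,-e_i\}$ share just a single common neighbour (all other pairs share two), and this property is preserved by $\phi$; so any partition of $\Gamma(u)$ into pairs with the stated properties must coincide with the one just constructed. There is no genuinely hard step here; the only point requiring any care is confirming that $\Link_2^-$ is large enough to record all of the relevant common-neighbour and corner-vertex data, which it is by the distance estimates above.
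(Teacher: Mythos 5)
Your proposal is correct and is exactly the argument the paper leaves implicit by labelling this an Observation: verify the common-neighbour structure at $0$ in $\mathbb{L}^d$ directly, then transfer it to $u$ via the isomorphism $\Link_2^{-}(0,\mathbb{L}^d) \to \Link_2^{-}(u,G)$, noting that all relevant vertices and edges (common neighbours of two neighbours of $u$, and the edges witnessing them) lie within $\Link_2^{-}$. Your care over why $\Link_2^{-}$ captures all common neighbours, and the forced nature of the partition via the ``unique common neighbour'' criterion, are precisely the points that make the observation routine.
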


\begin{remark}
\label{remark:3locally}
Suppose that $G$ is weakly 2-locally $\mathbb{L}^d$ for some integer $d \geq 2$. Suppose $a_1$ and $a_2$ are opposite one another across $u$, and $b \in \Gamma(u) \setminus \{a_1,a_2\}$. Then $a_1$ and $b$ have exactly two common neighbours, $u$ and $c_1$ (say). Similarly, $a_2$ and $b$ have exactly two common neighbours, $u$ and $c_2$ (say). If, in addition, $G$ is weakly $3$-locally $\mathbb{L}^d$, then $c_1$ and $c_2$ must be opposite one another across $b$, since $b$ is their only common neighbour. (Example \ref{example:Ld} shows that for each $d \geq 3$, this need not be true if $G$ is merely $2$-locally-$\mathbb{L}^d$.)
\end{remark}

\begin{lemma}
\label{lemma:condition}
Let $d \geq 2$ be an integer. Suppose $G$ is weakly $2$-locally $\mathbb{L}^d$. Suppose further that $p:V(\mathbb{L}^d) \to V(G)$ is a covering map from $\mathbb{L}^d$ to $G$. Then for every $x \in V(\mathbb{L}^d)$ and every $i \in [d]$, $p(x+e_i)$ is opposite $p(x-e_i)$ across $p(x)$.
\end{lemma}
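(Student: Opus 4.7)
The plan is to invoke the combinatorial characterisation of opposite pairs furnished by Observation~\ref{observation:opposite}: two distinct neighbours $a,b$ of a vertex $u$ in $G$ are opposite across $u$ if and only if $u$ is their unique common neighbour in $G$; otherwise they have exactly two common neighbours. Setting $u := p(x)$, the covering property applied at $x$ ensures that $p$ restricts to a bijection $\Gamma(x)\to\Gamma(u)$, so the $2d$ vertices $\{p(x\pm e_i):i\in[d]\}$ are pairwise distinct and exhaust $\Gamma(u)$.

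The heart of the argument is to rule out opposite-ness for the ``crossed'' pairs. For any distinct $i,j\in[d]$ and any $\epsilon_1,\epsilon_2\in\{\pm 1\}$, I will exhibit two distinct common neighbours of $p(x+\epsilon_1 e_i)$ and $p(x+\epsilon_2 e_j)$ in $G$. The natural candidates are $u$ itself, since $x$ is a common neighbour of $x+\epsilon_1 e_i$ and $x+\epsilon_2 e_j$ in $\mathbb{L}^d$, and $p(x+\epsilon_1 e_i+\epsilon_2 e_j)$, since $x+\epsilon_1 e_i+\epsilon_2 e_j$ is adjacent to both $x+\epsilon_1 e_i$ and $x+\epsilon_2 e_j$ in $\mathbb{L}^d$ and $p$ is a graph homomorphism. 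These two images are distinct: the covering property at $x+\epsilon_1 e_i$ gives that $p$ is injective on $\Gamma(x+\epsilon_1 e_i)$, a set containing both $x$ and $x+\epsilon_1 e_i+\epsilon_2 e_j$, which are distinct in $\mathbb{L}^d$ precisely because $i\ne j$. By the characterisation, $p(x+\epsilon_1 e_i)$ and $p(x+\epsilon_2 e_j)$ are therefore not opposite across $u$.

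Consequently, in the unique partition of $\Gamma(u)$ into $d$ opposite pairs, the partner of $p(x+e_i)$ must be some $p(x+\epsilon e_j)$ with $j=i$; the only such vertex distinct from $p(x+e_i)$ is $p(x-e_i)$. Hence $\{p(x+e_i),p(x-e_i)\}$ is one of the opposite pairs at $u$, as required. The only real subtlety is confirming distinctness of the two common neighbours via the local injectivity of $p$; beyond that, the argument is purely a bookkeeping exercise built on the uniqueness part of Observation~\ref{observation:opposite}.
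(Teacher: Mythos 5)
Your proposal is correct and is essentially the paper's own argument: the paper proceeds by contradiction, assuming $p(e_1)$ is opposite some $p(se_j)$ with $j\neq 1$ and then noting that $p(e_1+se_j)$ would have to equal $p(0)$, contradicting injectivity of $p$ on $N(e_1)$ --- which is exactly your observation that $p(x)$ and $p(x+\epsilon_1 e_i+\epsilon_2 e_j)$ are distinct common neighbours of any crossed pair. The only difference is that you phrase the elimination of crossed pairs directly rather than contrapositively; the key step and the appeal to Observation~\ref{observation:opposite} are the same.
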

\begin{proof}
Without loss of generality, we may assume that $x=0$, and $i=1$. Suppose for a contradiction that $p(e_1)$ is not opposite $p(-e_1)$ across $p(0)$. Then there exists $s \in \{\pm 1\}$ and $j \neq 1$ such that $p(se_j)$ is opposite $p(e_1)$ across $p(0)$, i.e. $p(0)$ is the unique common neighbour of $p(e_1)$ and $p(se_j)$. Since $p$ is a graph homomorphism, and $e_1+se_j$ is a common neighbour of $e_1$ and $se_j$, we must have $p(se_j+e_1) = p(0)$. But this contradicts the fact that $p$ is injective on $N(e_1)$, proving the lemma.
\end{proof}

We can now prove Theorem \ref{thm:cover-d}.

\begin{proof}[Proof of Theorem \ref{thm:cover-d}.]
We will construct a normal covering map $p:V(\mathbb{L}^d) \to V(G)$, directly. Choose any $v_0 \in V(G)$, and define $p(0)=v_0$. Write $\Gamma(v_0) = \{w_{\pm i}:\ i \in [d]\}$, where $w_{i}$ is opposite $w_{-i}$ across $v_0$, for each $i \in [d]$. Define $p(e_i) = w_{i}$ and $p(-e_i) = w_{-i}$ for each $i \in [d]$. We will show that there is a unique way of extending $p$ to all of $\mathbb{Z}^d$, in such a way that $p$ is a covering map from $\mathbb{L}^d$ to $G$.

We first make the following.
\begin{claim}
\label{claim:at-most-one}
There is at most one covering map $p$ from $\mathbb{L}^d$ to $G$ such that $p(0)=v_0$, $p(e_i) = w_i$ for all $i \in [d]$, and $p(-e_i) = w_{-i}$ for all $i \in [d]$.
\end{claim}
\begin{proof}[Proof of Claim \ref{claim:at-most-one}]
Note that if $p:\mathbb{Z}^d \to V(G)$ is a covering map from $\mathbb{L}^d$ to $G$, and $xyzwx$ is a 4-cycle in $\mathbb{L}^d$, then $p(x)p(y)p(z)p(w)p(x)$ is a 4-cycle in $G$. Since any three vertices of $G$ are in at most one 4-cycle, this implies that for any $4$-cycle $xyzwx$ of $\mathbb{L}^d$, the values of $p(x)$, $p(y)$ and $p(z)$ determine the value of $p(w)$.

This fact, combined with Lemma \ref{lemma:condition}, immediately implies that there is at most one extension of $p$ to all of $\mathbb{Z}^d$, such that $p$ is a covering map from $\mathbb{L}^d$ to $G$. (Starting with the set $N(0) = \{0\} \cup \{\pm e_i:\ i \in [d]\}$ on which $p$ is initially defined, we may cover all of $\mathbb{Z}^d$ by successive applications of the two operations of adding the third of three consecutive vertices on a straight path of length 2, and adding the fourth vertex of a 4-cycle.)  
\end{proof}

We now turn to showing existence. Let $x \in \mathbb{Z}^d$; we define $p(x)$ as follows. Let $P$ be any geodesic from $0$ to $x$ in $\mathbb{L}^d$. Let $H = \mathbb{L}^d[N(P)]$ be the subgraph of $\mathbb{L}^d$ induced on $N(P)$. We make the following.

\begin{claim}
\label{claim:exists-unique}
There exists a unique map $q:V(H)\to V(G)$ such that:
\begin{itemize}
\item[(1)] $q$ agrees with $p$ on $N(0)$, i.e. $q(0)=v_0$, $q(e_i) = w_i$, $q(-e_i) = w_{-i}$ for all $i \in [d]$;
\item[(2)] for each $y \in V(P)$ and each $i \in [d]$, $q(y+e_i)$ is opposite $q(y-e_i)$ across $y$;
\item[(3)] $q|\Gamma(y)$ is a bijection from $\Gamma(y)$ to $\Gamma(q(y))$, for all $y \in V(P)$;
\item[(4)] If $abcda$ is a 4-cycle containing an edge of $P$, then $q(a)q(b)q(c)q(d)q(a)$ is a 4-cycle in $G$.
\end{itemize}
\end{claim}
\begin{proof}[Proof of Claim \ref{claim:exists-unique}:]
WLOG WMA $x$ is in the positive orthant. Write $P = x_0 x_1 \ldots x_l$, where $x_0=0$ and $x_l = x$. For each $k \leq l$, let $P_k = x_0 x_1 \ldots x_k$. We construct $q$ (and show uniqueness) recursively. Let $k \leq l$, and suppose we have already defined $q$ on $N(P_k)$ such that $q$ satisfies properties (1)-(4) above when $P$ is replaced by $P_k$. We split into two cases.

{\em Case 1:} $x_{k-1},x_k,x_{k+1}$ are colinear. In this case, WLOG WMA $x_{k+1}-x_k = x_{k} - x_{k-1} = e_1$. To satisfy property (2) when $y=x_{k+1}$ and $i=1$, we must define $q(x_{k+1}+e_1)$ to be the vertex of $G$ which is opposite $q(x_{k})$ across $q(x_{k+1})$. For each $i >1$, observe that $q(x_k+e_i)$ and $q(x_{k+1}) = q(x_k+e_1)$ are not opposite one another across $q(x_k)$, so they have exactly two common neighbours, $q(x_k)$ and another vertex, $v_i$ say. To satisfy property (4) at the 4-cycle with vertex-set $\{x_k, x_{k+1}, x_{k+1}+e_i,x_k+e_i\}$, we must define $q(x_{k+1}+e_i) = v_i$. Similarly, for each $i >1$, observe that $q(x_k-e_i)$ and $q(x_{k+1}) = q(x_k+e_1)$ are not opposite one another across $q(x_k)$, so they have exactly two common neighbours, $q(x_k)$ and another vertex, $v_i'$ say. To satisfy property (4) at the 4-cycle with vertex-set $\{x_k, x_{k+1}, x_{k+1}-e_i,x_k-e_i\}$, we must define $q(x_{k+1}-e_i) = v_i'$. By Remark \ref{remark:3locally}, for each $i > 1$, $q(x_{k+1}+e_i)$ and $q(x_{k+1}-e_i)$ are opposite one another across $q(x_{k+1})$, so property (2) holds when $y=x_{k+1}$ for each $i > 1$. Property (3) now holds when $y=x_{k+1}$, because it holds when $y=x_k$, and $G$ is weakly 2-locally-$\mathbb{L}^d$.

{\em Case 2:} $x_{k-1},x_k,x_{k+1}$ are not colinear. In this case, WLOG WMA $x_{k+1}-x_{k} = e_1$ and $x_{k} - x_{k-1} = e_2$. To satisfy property (2) when $y=x_{k+1}$ and $i=1$, we must define $q(x_{k+1}+e_1)$ to be the vertex of $G$ which is opposite $q(x_{k})$ across $q(x_{k+1})$. Observe that $q(x_k+e_2)$ and $q(x_{k+1}) = q(x_k+e_1)$ are not opposite one another across $q(x_k)$, so they have exactly two common neighbours, $q(x_k)$ and another vertex, $v$ say. To satisfy property (4) at the 4-cycle with vertex-set $\{x_k, x_{k+1}, x_{k+1}+e_2,x_k+e_2\}$, we must define $q(x_{k+1}+e_2)=v$. Note that we have already defined $q(x_{k+1}-e_2) = q(x_{k-1}+e_1)$; this vertex is opposite $q(x_{k+1}+e_2)$ across $q(x_{k+1})$ by Remark \ref{remark:3locally}, so property (2) holds when $y=x_{k+1}$ and $i=2$. For each $i >2$, $q(x_{k+1}+e_i)$ and $q(x_{k+1}-e_i)$ must be defined exactly as in Case 1. As in Case 1, for each $i > 2$, $q(x_{k+1}+e_i)$ and $q(x_{k+1}-e_i)$ are then opposite one another across $q(x_{k+1})$, so property (2) holds when $y=x_{k+1}$ for each $i > 2$. Again as in Case 1, property (3) now holds when $y=x_{k+1}$, because it holds when $y=x_k$, and $G$ is weakly 2-locally-$\mathbb{L}^d$.
\end{proof}

We define $p(x)=q(x)$. Our next aim is to prove:
\begin{equation} \label{eq:well-defined-d} q(x) \textrm{ is independent of the choice of geodesic }P \textrm{ from }0 \textrm{ to } x.\end{equation}

To prove (\ref{eq:well-defined-d}), WLOG WMA $x$ is in the positive quadrant. Observe that for any two geodesics $P,P'$ from $0$ to $x$, we can get from $P$ to $P'$ by a sequence of `elementary switches', meaning operations which replace some sub-path $(y,y+e_i,y+e_i+e_j)$ by the sub-path $(y,y+e_j,y+e_i+e_j)$, for some $i \neq j$. Hence, it suffices to show that if $P$ and $P'$ differ from one another by an elementary switch, then the corresponding maps $q$ and $q'$ satisfy $q(x) = q'(x)$. Suppose then that $P = (x_0,x_1,\ldots,x_k,x_{k}+e_i,x_{k}+e_i+e_j,x_{k+3},\ldots, x_l)$ and $P' = (x_0,x_1,\ldots,x_k,x_{k}+e_j,x_{k}+e_i+e_j,x_{k+3},\ldots,x_l)$, where $x_0 = 0$ and $x_l=x$. WLOG, WMA $i=1$ and $j=2$, and let us write $y: = x_k$, so that
\begin{align*} 
P & = (x_0,x_1,\ldots,x_{k-1},y,y+e_1,y+e_1+e_2,x_{k+3},\ldots, x_l),\\
P' &= (x_0,x_1,\ldots,x_{k-1},y,y+e_2,y+e_1+e_2,x_{k+3},\ldots,x_l).
\end{align*}

We have $q(y) = q'(y)$ and $q(y+e_i) = q'(y+e_i)$ for all $i \in [d]$, since $P'_k = P_k$ and $q$ is uniquely determined on $N(P_k)$ (and $q'$ on $N(P'_k)$). In other words, $q$ and $q'$ agree with one another on $N(y)$. We assert that $q$ and $q'$ agree with one another on $N(y+e_1+e_2)$. Indeed, since $G$ is weakly 3-locally $\mathbb{L}^d$, there exists a graph isomorphism $\phi:\Link_3^{-}(y,\mathbb{L}^d) \to\Link_3^{-}(q(y),G)$ such that $\phi$ agrees with $q$ (and $q'$) on $N(y)$; it is easy to see that this $\phi$ is unique. Moreover, since $q$ and $q'$ satisfy properties (2) and (4), they must agree with $\phi$ on $B_{3}(y,\mathbb{L}^d)$, wherever they are defined. Since $N(y+e_1+e_2) \subset B_{3}(y,\mathbb{L}^d)$, it follows that $q$ and $q'$ must agree with one another on $N(y+e_1+e_2)$, as asserted. By the uniqueness part of Claim \ref{claim:exists-unique} (applied with $x_{k+2} = y+e_1+e_2$ in place of $0$), it follows now that $q = q'$ on $\{x_{k+2},x_{k+3},\ldots,x_l\}$, so in particular $q(x) = q'(x)$, proving (\ref{eq:well-defined-d}).

It follows from property (3) of Claim \ref{claim:exists-unique} that $p:\mathbb{Z}^d \to V(G)$ is a covering map from $\mathbb{Z}^d$ to $G$. Normality follows from Claim \ref{claim:at-most-one}. To see this, it suffices to show that the group of covering transformations of $p$ is transitive on the fibre $p^{-1}(0)$. So let $x \in \mathbb{Z}^d \setminus \{0\}$ with $p(x) = p(0)$. By Lemma \ref{lemma:condition}, $p(e_i)$ and $p(-e_i)$ must be opposite one another across $p(0)$ for all $i \in [d]$, and $p(x+e_i)$ and $p(x-e_i)$ must be opposite one another across $p(x)=p(0)$ for all $i \in [d]$. Hence, we have
$$\{\{p(x+e_i),p(x-e_i)\}:\ i \in [d]\} = \{\{p(e_i),p(-e_i)\}:\ i \in [d]\}.$$
Since $\Aut(\mathbb{L}^d) = T(\mathbb{Z}^d) \rtimes B_d$, and the elements of $B_d$ correspond precisely to the permutations of $\{\pm e_i:\ i \in [d]\}$ which preserve the partition
$$\{\{e_i,-e_i\}:\ i \in [d]\}$$
(see Fact \ref{fact:aut-ld}), we may choose $\alpha \in \Aut(\mathbb{L}^d)$ such that $\alpha(0) = x$ and $p(w) = p(\alpha(w))$ for all $w \in \Gamma(0)$. (Take $\alpha = g \circ t_x$, where $t_x$ is translation by $x$ and $g$ is the appropriate element of $B_d$.)

Observe that $p \circ \alpha$ is a covering map from $\mathbb{L}^d$ to $G$ which agrees with $p$ on $N(0)$. Hence, by Claim \ref{claim:at-most-one}, we have $p \circ \alpha = p$, so $\alpha \in \CT(p)$. Therefore, $\CT(p)$ acts transitively on $p^{-1}(0)$, as required. Hence, $p$ is a normal covering map from $\mathbb{L}^d$ to $G$. This completes the proof of Theorem \ref{thm:cover-d}.
\end{proof}

\begin{example}
\label{example:Ld}
We now give an example showing that Theorem \ref{thm:cover-d} is best possible, in the sense that for every integer $d \geq 3$, there exists a finite, connected graph which is $2$-locally-$\mathbb{L}^d$ but which is not covered by $\mathbb{L}^d$. We first deal with the case $d=3$.

Let us recall some more group-theoretic notions. If $\Gamma$ is a group, and $S \subset \Gamma$ with $\Id \notin S$ and $S^{-1}=S$, the {\em (right) Cayley graph of $G$ with respect to $S$} is the graph with vertex-set $G$ and edge-set
$$\{\{g,gs\}:\ g \in \Gamma,\ s \in S\}.$$
It is sometimes denoted by $\Cay(\Gamma,S)$.

We write finitely presented groups in the form
$$\langle a_1,a_2,\ldots,a_s;\ R_1, \ldots, R_N \rangle$$
where $a_1,\ldots,a_s$ are the {\em generators} and $R_1,\ldots,R_N$ are the {\em relations} (i.e., $R_i$ is an equation of the form $w_i=w_i'$, where $w_i$ and $w_i'$ are words in $a_1,\ldots,a_s$ and their inverses).

If $\Gamma$ is a finitely presented group with generators $a_1,\ldots,a_s$, the {\em length} of the word
$$a_{i_1}^{r_1} a_{i_2}^{r_2} \ldots a_{i_t}^{r_t}$$
is defined to be $\sum_{i=1}^{t}|r_i|$; for example, $a_1^{-2} a_2^2 a_2^{-1}$ has length 5. A {\em relator} is a word which evaluates to the identity in $\Gamma$. A relator is {\em trivial} if it evaluates to the identity in the free group with generators $a_1,a_2,\ldots,a_s$. For example, the trivial relators of length two are $a_i a_i^{-1}$ and $a_i^{-1}a_i$ (for $i \in [s]$).

Let $\Gamma$ be the finitely presented group with three generators defined by
\begin{equation}
\label{eq:pres}
\Gamma = \langle a,b,c ;\ a^{-1} b = c^2, b^{-1}c = a^2, c^{-1}a = b^2\rangle,
\end{equation}
and let $G = \Cay(\Gamma,\{a,b,c,a^{-1},b^{-1},c^{-1}\})$, i.e. $G$ is the graph of the finitely presented group $\Gamma$ with respect to the generators $a,b,c$. It can be checked (e.g. using a computer algebra system) that $\Gamma$ is a finite group, and in fact that $\Gamma \cong \mathbb{F}_2^4 \rtimes C_7$, so $|\Gamma|=112$. A concrete realisation of $\Gamma$ is the group
$$ T(\mathbb{F}_2^4) \rtimes \langle M \rangle \leq \Aff(\mathbb{F}_2^4),$$
where $\Aff(\mathbb{F}_2^4)$ denotes the group of all affine transformations of the vector space $\mathbb{F}_2^4$, i.e.
$$\Aff(\mathbb{F}_2^4) = \{(x \mapsto Ax+v):\ A \in \GL(\mathbb{F}_2^4),\ v \in \mathbb{F}_2^4\},$$
$$T(\mathbb{F}_2^4) = \{(x \mapsto x+v):\ v \in \mathbb{F}_2^4\} \leq \Aff(\mathbb{F}_2^4)$$
denotes the subgroup of all translations, and
$$M = \left( \begin{array}{cccc}0 & 0 & 1 & 0\\ 1 & 1 & 0 & 0 \\ 0 & 1 & 1 & 0\\ 0 & 0 & 0 & 1\end{array} \right).$$
Note that $M^7 = \Id$, so $\langle M \rangle \leq \GL(\mathbb{F}_2^4)$ is a cyclic group of order 7. We may take the generators $a,b,c$ to be
\begin{align*} a & = (x \mapsto Mx + (1,0,0,1)^{\top}),\\
b & = (x \mapsto M^2x + (1,0,1,1)^{\top}),\\
c & = (x \mapsto M^4 x + (0,1,0,1)^{\top}),\end{align*}
where we compose these functions from left to right, so that $(a \cdot b)(x) = b(a(x))$.

We now find the Abelianisation of $\Gamma$, i.e. the quotient group $\Gamma/[\Gamma,\Gamma]$, where
$$[\Gamma,\Gamma] : = \{ghg^{-1}h^{-1}:\ g,h \in \Gamma\}$$
denotes the commutator subgroup of $\Gamma$. Let $\bar{a},\bar{b},\bar{c}$ denote the images of $a,b,c$ in the Abelianization of $\Gamma$. Then, from the second relation, we have $\bar{c} = \bar{a}^2 \bar{b}$; substituting this into the first relation gives $\bar{b} = \bar{a} \bar{c}^2 = \bar{a} \bar{a}^4 \bar{b}^2$, so $\bar{b} = \bar{a}^{-5}$. Substituting this back into the second relation gives $\bar{c} = \bar{a}^2 \bar{b} = \bar{a}^{-3}$. The third relation then gives $1=\bar{c}^{-1} \bar{a} \bar{b}^{-2} = \bar{a}^3 \bar{a} \bar{a}^{10} = \bar{a}^{14}$. Hence, 
$$\Gamma/[\Gamma,\Gamma] = \langle \bar{a} \rangle \cong C_{14},$$
a cyclic group of order 14, and we have
$$\bar{b} = \bar{a}^9, \quad \bar{c} = \bar{a}^{11}, \quad \bar{a}^{-1} = \bar{a}^{13},\quad \bar{b}^{-1} = \bar{a}^5,\quad \bar{c}^{-1} = \bar{a}^3.$$
Since each of $\bar{a},\bar{b},\bar{c}$ is an odd power of $\bar{a}$, there is no relator of odd length (in $\bar{a},\bar{b},\bar{c}$ and their inverses) in $\Gamma/[\Gamma,\Gamma]$, and so there is no relator of odd length in the group $\Gamma$. Hence, $G$ has no odd cycle. In particular, $G$ has no cycle of length 3 or 5. Moreover, $\bar{a},\bar{b},\bar{c},\bar{a}^{-1},\bar{b}^{-1},\bar{c}^{-1}$ are all distinct elements of $\Gamma/[\Gamma,\Gamma]$. Hence, $a,b,c,a^{-1},b^{-1},c^{-1}$ are all distinct elements of $\Gamma$, so $G$ is $6$-regular. 

It can be checked that for the group $\Gamma$, the only relators of length 4 are the 24 relators arising from rearranging the relations in (\ref{eq:pres}) and taking inverses. (We suppress the details of this calculation, as it is straightforward but somewhat long; it can easily be done using a computer algebra system.) Hence, the following words of length2 appear in no non-trivial relator of length 4:
\begin{equation} \label{eq:opposite} ab,bc,ca,b^{-1}a^{-1},c^{-1}b^{-1},a^{-1}c^{-1} \end{equation}
all the other non-trivial words of length two appear as the initial two letters of exactly one non-trivial relator of length 4.

It follows that $G$ is $2$-locally-$\mathbb{L}^3$. Indeed, since $G$ is vertex-transitive, it suffices to check this at the vertex $\Id$, only. In other words, we must construct a map $\psi:B_2(0,\mathbb{L}^3) \to V(G)$ which is an isomorphism from $\Link_2(0,\mathbb{L}^2)$ to $\Link_2(\textrm{Id},G)$, with $\psi(0)=\textrm{Id}$.

Let $S : = \{a,b,c,a^{-1},b^{-1},c^{-1}\}$. Let us say that two distinct elements $x,y \in S$ are {\em complementary} if $x^{-1}y$ appears in no non-trivial relator of length 4. (Note that this relation is symmetric, as $x^{-1}y$ appears in the list (\ref{eq:opposite}) if and only if $y^{-1}x$ does. Moreover, each element of $S$ is complementary to exactly one other element of $S$.) We can construct an appropriate map $\psi$ by choosing the six images $\psi(\pm e_i)$ to be distinct elements of $S$, in such a way that $\psi(e_i)$ and $\psi(-e_i)$ are complementary for each $i \in \{1,2,3\}$. For $i \neq j$ and $s,t \in \{\pm 1\}$, we can then define $\psi(se_i + te_j)$ as follows. Let $x = \psi(se_i)$ and $y = \psi(te_j)$. Then $x$ and $y$ are distinct and not complementary, so $x^{-1}y$ appears as the initial two letters of exactly one non-trivial relator of length 4, say $x^{-1} y u v = \Id$. Define $\psi(se_i + te_j) = yu\ (= xv^{-1})$. Finally, for each $i$ and each $s \in \{\pm 1\}$, define $\psi(2se_i)$ as follows. Let $x = \psi(se_i)$. Let $y$ be the unique element of $S$ such that $x^{-1}$ and $y$ are complementary, and define $\psi(2se_i) = xy$.

An explicit choice of $\psi$ is as follows.
\begin{equation*}
\begin{split}
\psi(0) & = \textrm{Id},\\
\psi(e_1) & = a,\\
\psi(-e_1) & = c^{-1},\\
\psi(e_2) & = b,\\
\psi(-e_2) & = a^{-1},\\
\psi(e_3) & = c,\\
\psi(-e_3) & = b^{-1},\\
\psi(2e_1) & = ab,\\
\psi(-2e_1)&=c^{-1}b^{-1},\\
\psi(2e_2)&= bc,\\
\psi(-2e_2)&=a^{-1}c^{-1},\\
\psi(2e_3) &= ca,\\
\psi(-2e_3) &= b^{-1}a^{-1},
\end{split}
\quad
\begin{split}
\psi(e_1+e_2) & = ac = bc^{-1},\\
\psi(e_1-e_2) & = ac^{-1} = a^{-1}b^{-1},\\
\psi(-e_1+e_2) &= c^{-1}a = b^2,\\
\psi(-e_1-e_2) &= c^{-1}b = a^{-2},\\
\psi(e_1+e_3) &= ab^{-1} = cb,\\
\psi(e_1-e_3)&= a^2 = b^{-1}c,\\
\psi(-e_1+e_3)& = c^{-1}a^{-1} = cb^{-1},\\
\psi(-e_1-e_3) &= c^{-2} = b^{-1}a,\\
\psi(e_2+e_3) &= ca^{-1}=ba,\\
\psi(e_2-e_3)&= ba^{-1} = b^{-1}c^{-1},\\
\psi(-e_2+e_3)& = a^{-1}b = c^2,\\
\psi(-e_2-e_3) &= a^{-1}c = b^{-2}.
\end{split}
\end{equation*}

\newpage
\begin{figure}[here]
 \centering
 \includegraphics[width = 0.8\textwidth]{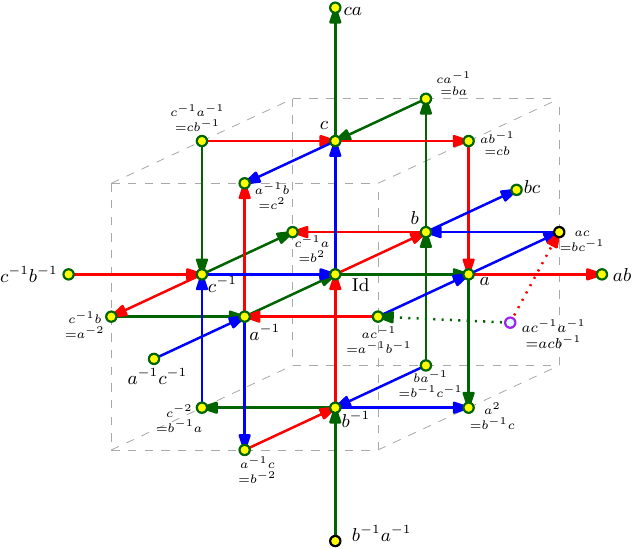}
 \label{fig:Y}
  \caption{The map $\psi$.}
\end{figure}

Using the facts that $G$ has no 3-cycle or 5-cycle, together with (\ref{eq:opposite}), it is easy to see that $\psi$ is an isomorphism from $\Link_2(0,\mathbb{L}^2)$ to $\Link_2(\textrm{Id},G)$. We may conclude that $G$ is 2-locally-$\mathbb{L}^3$.

On the other hand, we claim that $G$ is not covered by $\mathbb{L}^3$. Indeed, suppose for a contradiction that $p:\mathbb{L}^3 \to G$ is a cover map. By considering $p \circ \phi$ for some $\phi \in \Aut(\mathbb{L}^3)$ if necessary, we may assume that $p(0) = \textrm{Id}$ and $p(e_1)=a$. By Lemma \ref{lemma:condition}, $p(-e_1)$ must be opposite $p(e_1)=a$ across $p(0)=\textrm{Id}$, so $p(-e_1) = c^{-1}$. By considering $p \circ \phi$ for some $\phi \in \Aut(\mathbb{L}^3)$ fixing the $x$-axis, if necessary, we may assume that $p(e_2) = b$. Then, by Lemma \ref{lemma:condition}, $p(-e_2)$ must be opposite $p(e_2)=b$ across $p(0)=\textrm{Id}$, so $p(-e_2) = a^{-1}$. 

The only common neighbours of $p(e_1)=a$ and $p(e_2)=b$ are $\textrm{Id}$ and $ac=bc^{-1}$. Hence, we must have $p(e_1+e_2) = ac=bc^{-1}$ (clearly, $p(e_1 + e_2) \neq \textrm{Id}$, as $p$ must be bijective on $\Gamma(e_1)$). Similarly, the only common neighbours of $p(e_1)=a$ and $p(-e_2)=a^{-1}$ are $\textrm{Id}$ and $ac^{-1}=a^{-1}b^{-1}$, so we must have $p(e_1-e_2) = ac^{-1}=a^{-1}b^{-1}$. But then $p(e_1+e_2)$ and $p(e_1-e_2)$ have two common neighbours, namely $a$ and $ac^{-1}a^{-1} = acb^{-1}$, so they are not opposite one another, contradicting Lemma \ref{lemma:condition}. Hence, $G$ is not covered by $\mathbb{L}^3$, as claimed.

For $d \geq 4$, we let $\Gamma_d = \Gamma \times \mathbb{Z}_{14}^{d-3}$, and $G_d = \Cay(\Gamma_d,S_d)$ where
\begin{align*} S_d & = \{(a,0),(b,0),(c,0),(a^{-1},0),(b^{-1},0),(c^{-1},0)\}\\
& \cup \{(\textrm{Id},f_i):\ i \in [d-3]\} \cup \{(\textrm{Id},-f_i):\ i \in [d-3]\}),\end{align*}
and where $f_i = (0,0,\ldots,0,1,0,\ldots,0) \in \mathbb{Z}_{14}^{d-3}$ denotes the $i$th unit vector in $\mathbb{Z}_{14}^{d-3}$. It is easy to see (using the $d=3$ case) that $G_d$ is $2$-locally $\mathbb{L}^d$, but is not covered by $\mathbb{L}^d$.
\end{example}

\section{Proofs of `algebraic' structure theorems}  
In this section, we use standard results and techniques from topological graph theory and group theory, combined with the `topological' structure theorems of the previous two sections, to deduce Corollaries \ref{corr:torusklein2} and \ref{corr:structure-d}, which concern the `algebraic' (quotient-type) structure of graphs which have the 4-cycle-wheel property, or which are weakly 3-locally $\mathbb{L}^d$ (for $d \geq 3$).

\subsubsection*{The $d=2$ case.}

We first prove the following.

\begin{proposition}
\label{prop:quotient-wheel}
If $G$ is a finite, connected graph with the 4-cycle wheel property, then $G$ is isomorphic to $\mathbb{L}^2/\Gamma$, where $\Gamma$ is a subgroup of $\Aut(\mathbb{L}^2)$ with $|\mathbb{Z}^2/\Gamma| < \infty$ and with minimum displacement at least 3.
\end{proposition}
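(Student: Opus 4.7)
The plan is to combine Theorem~\ref{thm:cover-wheel} with Lemma~\ref{lemma:iso}. First, I would apply Theorem~\ref{thm:cover-wheel} to produce a normal covering map $p:V(\mathbb{L}^2)\to V(G)$. Then, setting $\Gamma := \CT(p) \leq \Aut(\mathbb{L}^2)$, Lemma~\ref{lemma:iso} immediately yields a graph isomorphism $G \cong \mathbb{L}^2/\Gamma$. After that, only two bookkeeping verifications remain: that $|\mathbb{Z}^2/\Gamma|<\infty$, and that $D(\Gamma) \geq 3$.

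The finiteness of $\mathbb{Z}^2/\Gamma$ is essentially free: under the isomorphism $\mathbb{L}^2/\Gamma \cong G$ the $\Gamma$-orbits on $V(\mathbb{L}^2)$ correspond bijectively to the vertices of $G$ (they are precisely the fibres of $p$, since $p$ is normal), so there are exactly $|V(G)|<\infty$ of them.

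For the minimum displacement bound I would argue by contradiction. Suppose there exist $\gamma \in \Gamma \setminus\{\Id\}$ and $x \in V(\mathbb{L}^2)$ with $d(x,\gamma(x)) \leq 2$. By Lemma~\ref{lemma:free}, $\gamma$ acts freely, so $\gamma(x) \neq x$, and by definition of a covering transformation $p(x) = p(\gamma(x))$. If $d(x,\gamma(x))=1$, then $\{x,\gamma(x)\}$ is an edge of $\mathbb{L}^2$ whose image is a loop at $p(x)$ in $G$, contradicting simplicity of $G$ (equivalently, injectivity of $p$ on $N(x)$). If $d(x,\gamma(x))=2$, pick a common neighbour $y$; then $x$ and $\gamma(x)$ are two distinct elements of $\Gamma(y)$ with the same $p$-image, contradicting the fact that $p$ maps $\Gamma(y)$ bijectively to $\Gamma(p(y))$. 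Hence $D(\Gamma) \geq 3$, completing the proof.

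The main obstacle in this proposition is really already carried by Theorem~\ref{thm:cover-wheel}; once the normal covering map is in hand, the deduction is just an application of the standard normal-covering/quotient dictionary (Lemma~\ref{lemma:iso}) together with a short displacement argument that uses only the local injectivity of a covering map and simplicity of $G$. No additional geometric or combinatorial input about the 4-cycle wheel property is required at this stage.
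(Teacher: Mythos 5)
Your proposal is correct and follows essentially the same route as the paper: Theorem~\ref{thm:cover-wheel} plus Lemma~\ref{lemma:free} and Lemma~\ref{lemma:iso} give $G \cong \mathbb{L}^2/\CT(p)$, finiteness is immediate, and the displacement bound is the same observation (the paper phrases it as ``$D(\Gamma)\leq 2$ would make $\mathbb{L}^2/\Gamma$ fail to be 4-regular'', while you derive the contradiction directly from local injectivity of $p$ — an equivalent, if anything slightly more explicit, formulation).
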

\begin{proof}
Let $G$ be a finite, connected graph with the 4-cycle wheel property. It follows immediately from Theorem \ref{thm:cover-2}, Lemma \ref{lemma:free} and Lemma \ref{lemma:iso} that $G$ is isomorphic to $\mathbb{L}^2/\Gamma$, where $\Gamma$ is a subgroup of $\Aut(\mathbb{L}^2)$ which acts freely on $\mathbb{L}^2$. Clearly, we have $|\mathbb{Z}^2/\Gamma| = |V(G)| < \infty$. Observe that if $\Gamma$ has minimum displacement at most $2$, then $\mathbb{L}^2/\Gamma$ is not 4-regular, so it does not have the 4-cycle wheel property. Hence, $\Gamma$ has minimum displacement at least 3.
\end{proof}

Our next step is to classify the subgroups $\Gamma$ of $\Aut(\mathbb{L}^2)$ which have $|\mathbb{Z}^2/\Gamma| < \infty$ and minimum displacement at least 3. Using the description of $\Aut(\mathbb{L}^2)$ in Fact \ref{fact:aut-ld}, it is easy to check the following.
\begin{claim}
\label{claim:torsion-free2}
If $\Gamma \leq \Aut(\mathbb{L}^2)$ has minimum displacement at least 3, then $\Gamma$ is torsion-free. \qed
\end{claim}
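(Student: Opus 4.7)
The plan is to prove the contrapositive: if $\gamma \in \Aut(\mathbb{L}^2) \setminus \{\Id\}$ has finite order, then its minimum displacement on $\mathbb{Z}^2$ is at most $2$. Since $\Gamma$ has minimum displacement at least $3$, it then contains no non-identity torsion element.

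First, using Fact \ref{fact:aut-ld}, I would write each $\gamma$ uniquely as $t_v \circ \sigma$ with $v \in \mathbb{Z}^2$ and $\sigma \in B_2$; note $|B_2| = 8$, with $\sigma$ being either the identity, one of three rotations (by $90^\circ$, $180^\circ$, $270^\circ$), or one of four reflections (in the lines $x=0$, $y=0$, $y=x$, $y=-x$). Viewing $\gamma$ as an element of $\Isom(\mathbb{R}^2)$ via Fact \ref{fact:embedding-isometries}, a non-identity translation has infinite order, so $\sigma \neq \Id$; and a direct check that $\gamma^n$ is a translation whose translation part vanishes only when $v$ is appropriately constrained shows that $\gamma$ has finite order precisely when it is a rotation around some centre $c \in \mathbb{R}^2$ or a pure reflection in some line $\ell$ (glide reflections have infinite order).

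Next, I would analyse each type. For a pure reflection $\gamma$ with axis parallel to the $x$-axis, the pure-reflection condition forces $v_1 = 0$, so $\gamma(x,y) = (x, v_2 - y)$ and the displacement at a lattice point is $|2y - v_2| \le 1$ at the nearest integer $y$ to $v_2/2$; the $y$-axis-parallel case is symmetric. For a pure diagonal reflection, say in the line $y = x - a$, the translation vector along the axis must be integral, so the axis contains lattice points and the displacement is $0$. For a half-turn, $c = v/2$ and the displacement at $(x,y) \in \mathbb{Z}^2$ is $|2x - v_1| + |2y - v_2|$, minimised to at most $2$ by choosing $x, y$ nearest to $v_1/2, v_2/2$. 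For a quarter-turn, one computes $c = \tfrac{1}{2}(v_1-v_2, v_1+v_2)$ and the displacement at $(x,y)$ equals $|x + y - v_1| + |x - y + v_2|$; a short parity case-split on $v_1 + v_2$ shows the displacement is $0$ when $v_1 + v_2$ is even and $1$ otherwise.

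In every case, the minimum displacement of $\gamma$ over $\mathbb{Z}^2$ is at most $2$. Hence no non-identity element of $\Gamma$ can have finite order, so $\Gamma$ is torsion-free. The only mild obstacle is organising the case analysis neatly; because $\Aut(\mathbb{L}^2)$ is so small and its finite-order elements have such rigid fixed-point sets (a half-integer lattice point or a half-integer translate of a coordinate/diagonal line), the argument is essentially a bounded calculation.
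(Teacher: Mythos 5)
Your proposal is correct and follows essentially the same route as the paper: both reduce to the observation that the only finite-order non-identity elements of $\Aut(\mathbb{L}^2)\leq\Isom(\mathbb{R}^2)$ are rotations and pure reflections, and then check case by case that each such element moves some lattice point $\ell^1$-distance at most $2$. Your version merely organises the case analysis via the decomposition $\gamma=t_v\circ\sigma$ with $\sigma\in B_2$ and explicit displacement formulas, where the paper instead lists the geometric types of the rotations and reflections directly; the content is the same.
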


We also need the $d=2$ case of the following simple fact (we will need the $d \geq 3$ case later).
\begin{claim}
\label{claim:rank-d}
If $\Gamma \leq \Aut(\mathbb{L}^d)$ with $|\mathbb{Z}^d/\Gamma| < \infty$, then the lattice of translations of $\Gamma$ has rank $d$.
\end{claim}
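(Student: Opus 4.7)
The plan is to reduce from $\Gamma$ to its translation subgroup $T_\Gamma$ using the finite-index fact $\Aut(\mathbb{L}^d) = T(\mathbb{Z}^d) \rtimes B_d$, and then observe that $T_\Gamma$-orbits on $\mathbb{Z}^d$ are precisely cosets of $L_\Gamma$, so finitely many such orbits forces $L_\Gamma$ to have full rank $d$.

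First, I would apply Fact \ref{fact:aut-ld}: the translation subgroup $T(\mathbb{Z}^d)$ is normal in $\Aut(\mathbb{L}^d)$ with quotient $B_d$, so $|\Aut(\mathbb{L}^d) : T(\mathbb{Z}^d)| = 2^d d!$. Setting $T_\Gamma := \Gamma \cap T(\mathbb{Z}^d)$, the second isomorphism theorem gives an injection $\Gamma/T_\Gamma \hookrightarrow \Aut(\mathbb{L}^d)/T(\mathbb{Z}^d) \cong B_d$, hence $|\Gamma : T_\Gamma| \leq 2^d d! < \infty$.

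Next, I would invoke the standard observation that whenever a group $G$ acts on a set $X$ and $H \leq G$, each $G$-orbit on $X$ is a union of at most $|G:H|$ many $H$-orbits, so $|X/H| \leq |G:H| \cdot |X/G|$. Applied to $\Gamma \geq T_\Gamma$ acting on $\mathbb{Z}^d$, this yields
\[
|\mathbb{Z}^d/T_\Gamma| \;\leq\; |\Gamma : T_\Gamma|\cdot |\mathbb{Z}^d/\Gamma| \;\leq\; 2^d d! \cdot |\mathbb{Z}^d/\Gamma| \;<\;\infty.
\]

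Finally, identifying $T_\Gamma$ with $L_\Gamma \leq \mathbb{Z}^d$ via $(x\mapsto x+v) \leftrightarrow v$, the $T_\Gamma$-orbits on $\mathbb{Z}^d$ are exactly the cosets of $L_\Gamma$ in $\mathbb{Z}^d$. Therefore $[\mathbb{Z}^d : L_\Gamma] = |\mathbb{Z}^d/T_\Gamma| < \infty$, which means $L_\Gamma$ is a finite-index subgroup of $\mathbb{Z}^d$ and hence has rank $d$, as required. There is no genuine obstacle here; the only point to be slightly careful about is making sure the orbit-counting inequality is applied with the correct group inclusion, and that the identification of $T_\Gamma$-orbits with $L_\Gamma$-cosets really uses that every element of $T_\Gamma$ is a translation by an element of $\mathbb{Z}^d$ (which is exactly what Fact \ref{fact:aut-ld} gives).
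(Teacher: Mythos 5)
Your proof is correct, and it takes a somewhat different route from the paper's. The paper argues by contraposition: assuming $\rank(L_\Gamma)<d$, it picks $w\in\mathbb{Z}^d$ outside the real span of $L_\Gamma$ and shows, via a pigeonhole argument on $B_d$, that any $\Gamma$-orbit meets the line $\{x+\lambda w:\lambda\in\mathbb{Z}\}$ in at most $2^d d!$ points (two elements $t_1g,t_2g$ of $\Gamma$ sending $x$ into the line would produce a nontrivial translation $t_1t_2^{-1}\in\Gamma$ along $w$, contradicting the choice of $w$), so the line meets infinitely many orbits. You instead argue directly: the second isomorphism theorem gives $|\Gamma:T_\Gamma|\leq|B_d|=2^d d!$, the standard orbit-splitting bound gives $|\mathbb{Z}^d/T_\Gamma|\leq|\Gamma:T_\Gamma|\cdot|\mathbb{Z}^d/\Gamma|<\infty$, and since $T_\Gamma$-orbits are precisely cosets of $L_\Gamma$, the lattice has finite index and hence full rank. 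Both arguments hinge on the same fact --- that the translations have index at most $2^d d!$ in $\Gamma$ --- but yours packages it as a clean coset count on all of $\mathbb{Z}^d$, whereas the paper localises the pigeonhole to a single transversal line; your version is arguably more transparent and generalises immediately to any group of automorphisms whose translation part has finite index, while the paper's version produces the slightly more concrete conclusion that a specific line already meets infinitely many orbits. All the individual steps you cite (the injection $\Gamma/T_\Gamma\hookrightarrow B_d$, the inequality $|X/H|\leq|G:H|\cdot|X/G|$, and the fact that a finite-index subgroup of $\mathbb{Z}^d$ has rank $d$) are standard and correctly applied.
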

\begin{proof}
Let $L_{\Gamma}$ denote the lattice of translations of $\Gamma$. Suppose $\rank(L_{\Gamma}) < d$. Let $\{v_1,\ldots,v_r\}$ be a $\mathbb{Z}$-basis for $L_{\Gamma}$; then $r < d$. Choose $w \in \mathbb{Z}^d \setminus \langle v_1,\ldots,v_r \rangle_{\mathbb{R}}$, where $\langle v_1,\ldots,v_r \rangle_{\mathbb{R}}$ denotes the subspace of $\mathbb{R}^d$ spanned by $v_1,\ldots,v_r$ over $\mathbb{R}$. We assert that for any $x \in \mathbb{Z}^d$, there are at most $2^d d!$ elements of $\{x+\lambda w:\ \lambda \in \mathbb{Z}\}:=L$ in the same $\Gamma$-orbit as $x$. Indeed, suppose otherwise. Let $S = \{\gamma \in \Gamma:\ \gamma(x) \in \{x+\lambda w:\ \lambda \in \mathbb{Z}\}\}$; then $|S| > 2^d d!$. Since $\Gamma \leq \Aut(\mathbb{L}^d) = T(\mathbb{Z}^d) \rtimes B_d$ and $|B_d|=2^d d!$ (see Fact \ref{fact:aut-ld}), by the pigeonhole principle, there exist $g \in B_d$ and two distinct translations $t_1,t_2 \in T(\mathbb{Z}^d)$ such that $t_1g, t_2g \in S$. Notice that $(t_1 g) (t_2 g)^{-1} = t_1 t_2^{-1}$ is a translation in $\Gamma \setminus \{\Id\}$. But there exist $y,z \in L$ such that $t_1 g(x)=y$ and $t_2 g(x)=z$, so $t_1 t_2^{-1}(z)=y$, so $t_1 t_2^{-1}$ fixes the set $L$, so $t_1t_2^{-1}$ is a translation by $ \mu w$ for some $\mu \in \mathbb{Z} \setminus \{0\}$, so $w \in \langle v_1,v_2,\ldots,v_r \rangle_{\mathbb{R}}$, contradicting our choice of $w$, and proving our assertion. The assertion implies that $\{\lambda w: \lambda \in \mathbb{Z}\}$ meets infinitely many $\Gamma$-orbits. Hence, $|\mathbb{Z}^d/\Gamma| = \infty$, proving the claim.
\end{proof}

Recall that we can view $\Aut(\mathbb{L}^2)$ as a discrete subgroup of $\Isom(\mathbb{R}^2)$ (see Facts \ref{fact:embedding-isometries} and \ref{fact:discrete}), and of course the same holds for any subgroup of $\Aut(\mathbb{L}^2)$. If $\Gamma \leq \Aut(\mathbb{L}^2)$ with $|\mathbb{Z}^2/\Gamma| < \infty$, then by Claim \ref{claim:rank-d}, the lattice of translations of $\Gamma$ has rank $2$, so $\Gamma$ is a 2-dimensional crystallographic group (see definition \ref{definition:crystallographic}). Combining this fact with Claim \ref{claim:torsion-free2} implies that if $\Gamma \leq \Aut(\mathbb{L}^2)$ with $|\mathbb{Z}^2/\Gamma| < \infty$ and with minimum displacement at least 3, then $\Gamma$ is a torsion-free 2-dimensional crystallographic group, i.e. a 2-dimensional Bieberbach group (see definition \ref{definition:bieberbach}). The classification of 2-dimensional Bieberbach groups (see for example \cite{charlap}) says the following.
\begin{proposition}
Let $\Gamma \leq \Isom(\mathbb{R}^2)$ be a 2-dimensional Bieberbach group. Then either
\begin{enumerate}
\item $\Gamma=\langle t_1,t_2\rangle$, where $t_1$ and $t_2$ are linearly independent translations, or
\item $\Gamma = \langle g,t \rangle$, where $g$ is a glide-reflection and $t$ is a translation in a direction perpendicular to the reflection-axis of $g$.
\end{enumerate}
\end{proposition}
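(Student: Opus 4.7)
The plan is to analyze $\Gamma$ via its point group $P_\Gamma \leq O(2)$, using that $\Gamma/T_\Gamma \cong P_\Gamma$ is finite (Fact~\ref{fact:bieberbach}) and that the translation lattice $L_\Gamma$ has rank $2$. The first step is to rule out any point group containing an element that forces torsion. If $\sigma \in O(2)$ is a non-trivial rotation of order $n$, then for any $t_v \circ \sigma \in \Gamma$ one computes $(t_v \circ \sigma)^n = t_{(I + \sigma + \cdots + \sigma^{n-1})v} = \Id$, since $I + \sigma + \cdots + \sigma^{n-1}$ vanishes for non-trivial rotations in $O(2)$; such an element is therefore torsion. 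So $P_\Gamma$ contains no non-trivial rotation, and since the composition of two reflections in $O(2)$ with distinct axes is a non-trivial rotation, $P_\Gamma$ contains at most one reflection. Hence $|P_\Gamma| \leq 2$.

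If $P_\Gamma = \{\Id\}$, then $\Gamma = T_\Gamma$ is a free abelian group of rank $2$, generated by two linearly independent translations, which gives case~(1). Otherwise $P_\Gamma = \{\Id, \sigma\}$ for a reflection $\sigma$; pick any $g = t_v \circ \sigma \in \Gamma$ and decompose $v = v_\parallel + v_\perp$ with respect to the axis of $\sigma$. A direct calculation gives $g^2 = t_{v + \sigma v} = t_{2 v_\parallel}$, and the same identity applied to $t_w \cdot g$ for any $w \in L_\Gamma$ gives $(t_w g)^2 = t_{2(v_\parallel + w_\parallel)}$. Torsion-freeness therefore forces $v_\parallel \notin \pi_\parallel(L_\Gamma)$, where $\pi_\parallel$ denotes orthogonal projection onto the axis of $\sigma$; in particular $v_\parallel \neq 0$, so $g$ is a genuine glide reflection. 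Moreover $g^2 \in T_\Gamma$ gives $2 v_\parallel \in L_\Gamma \cap \mathrm{axis}$.

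The technical heart of the proof is to exploit the $\sigma$-invariance of $L_\Gamma$ (immediate from normality of $T_\Gamma$ in $\Gamma$) to pin down its structure. Since $w + \sigma w = 2 w_\parallel$ and $w - \sigma w = 2 w_\perp$ lie in $L_\Gamma$ for every $w \in L_\Gamma$, a short argument gives $L_\Gamma \cap \mathrm{axis} = a\mathbb{Z}$ and $L_\Gamma \cap \mathrm{axis}^\perp = b\mathbb{Z}$ for some $a, b > 0$ and $L_\Gamma \subseteq (a/2)\mathbb{Z} \times (b/2)\mathbb{Z}$; the quotient $L_\Gamma/(a\mathbb{Z}\times b\mathbb{Z}) \leq (\mathbb{Z}/2\mathbb{Z})^2$ and the intersection constraints leave only two possibilities, the rectangular lattice $\mathbb{Z}(a,0)+\mathbb{Z}(0,b)$ and the rhombic lattice $\mathbb{Z}(a/2,b/2)+\mathbb{Z}(a/2,-b/2)$. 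In the rhombic case $\pi_\parallel(L_\Gamma) = (a/2)\mathbb{Z}$ would contain $v_\parallel \in (a/2)\mathbb{Z}$, contradicting $v_\parallel \notin \pi_\parallel(L_\Gamma)$; so $L_\Gamma$ is rectangular. Then $v_\parallel \in a/2 + a\mathbb{Z}$, and after replacing $g$ by $t_w \cdot g$ for a suitable $w \in L_\Gamma$ we may assume $v_\parallel = a/2$, so $g^2 = t_{(a,0)}$. Setting $t := t_{(0,b)}$, we obtain $\Gamma = \langle T_\Gamma, g\rangle = \langle g, t\rangle$ with $t$ perpendicular to the glide axis of $g$, giving case~(2). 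I expect the main obstacle to be the lattice classification and the elimination of the rhombic case; everything else reduces to direct computation in the semidirect product $\Isom(\mathbb{R}^2) = T(\mathbb{R}^2) \rtimes O(2)$.
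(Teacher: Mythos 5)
Your proposal is correct, but it cannot be compared line-by-line with the paper's argument for the simple reason that the paper does not prove this proposition: it is quoted as "the classification of 2-dimensional Bieberbach groups (see for example \cite{charlap})", and the authors explicitly decline to give the "somewhat long-winded" elementary derivation. What you have written is essentially that standard derivation, and it checks out. The key computations are all sound: $(t_v\circ\sigma)^n = t_{(I+\sigma+\cdots+\sigma^{n-1})v}=\Id$ for a nontrivial rotation $\sigma$ of order $n$ (so torsion-freeness kills all nontrivial rotations in $P_\Gamma$, and hence leaves at most one reflection, giving $|P_\Gamma|\le 2$); $g^2=t_{2v_\parallel}$ and $(t_wg)^2=t_{2(v_\parallel+w_\parallel)}$, so torsion-freeness forces $v_\parallel\notin\pi_\parallel(L_\Gamma)$; the $\sigma$-invariance of $L_\Gamma$ (from normality of $T_\Gamma$) pinning $L_\Gamma$ between $a\mathbb{Z}\times b\mathbb{Z}$ and $(a/2)\mathbb{Z}\times(b/2)\mathbb{Z}$, with the two cosets $(a/2,0)$ and $(0,b/2)$ excluded by the definitions of $a$ and $b$, leaving only the rectangular and rhombic lattices; and the elimination of the rhombic case because there $\pi_\parallel(L_\Gamma)=(a/2)\mathbb{Z}\ni v_\parallel$. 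Two points are worth making explicit when you write this up: (i) the nontriviality of $L_\Gamma\cap\mathrm{axis}$ and $L_\Gamma\cap\mathrm{axis}^\perp$ uses rank$(L_\Gamma)=2$ (otherwise $L_\Gamma$ would lie in a line), and (ii) finiteness of $P_\Gamma$ (Bieberbach's first theorem, Fact \ref{fact:bieberbach}) is what guarantees every rotation in $P_\Gamma$ has finite order, so the torsion computation applies. With those noted, your argument is a complete, self-contained proof of a statement the paper only cites.
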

In case (1), $\mathbb{R}^2/\Gamma$ is a (topological) torus; in case (2), $\mathbb{R}^2/\Gamma$ is a (topological) Klein bottle. This immediately implies Corollary \ref{corr:torusklein2}.

\subsubsection*{The $d \geq 3$ case.}
\begin{proof}[Proof of Corollary \ref{corr:structure-d}.]
Let $G$ be a finite, connected graph which is weakly 3-locally $\mathbb{L}^d$. It follows immediately from Theorem \ref{thm:cover-d}, Lemma \ref{lemma:free} and Lemma \ref{lemma:iso} that $G$ is isomorphic to $\mathbb{L}^d/\Gamma$, where $\Gamma$ is a subgroup of $\Aut(\mathbb{L}^d)$ which acts freely on $\mathbb{L}^d$.

As in the $d=2$ case, recall that $\Aut(\mathbb{L}^d)$ can be viewed as a discrete subgroup of $\Isom(\mathbb{R}^d)$, and of course the same holds for any subgroup of $\Aut(\mathbb{L}^d)$. Since $\Gamma \leq \Aut(\mathbb{L}^d)$ with $|\mathbb{Z}^d/\Gamma| = |V(G)| < \infty$, by Claim \ref{claim:rank-d}, the lattice of translations $L_{\Gamma}$ of $\Gamma$ has rank $d$, so $\mathbb{R}^d/\Gamma$ is compact, and by definition, $\Gamma$ is a $d$-dimensional crystallographic group. Hence, by Fact \ref{fact:proper-orbifold}, $\mathbb{R}^d/\Gamma$ can be given the structure of a $d$-dimensional topological orbifold.
\end{proof}

Our aim is now to obtain an exact (algebraic) characterisation of the graphs which are weakly $3$-locally $\mathbb{L}^d$, and to show that the orbit space $\mathbb{R}^d/\Gamma$ in Corollary \ref{corr:structure-d} need not be a topological manifold when $d \geq 7$. For this, we need more detailed information about the group $\Gamma$. We first prove the following easy lemma.

\begin{lemma}
\label{lemma:min-dist-r}
Let $d,r \in \mathbb{N}$, and let $\Gamma \leq \Aut(\mathbb{L}^d)$. Let $D(\Gamma)$ denote the minimum displacement of $\Gamma$. Then
\begin{itemize}
\item[(i)] $\mathbb{L}^d/\Gamma$ is $r$-locally $\mathbb{L}^d$ if and only if $D(\Gamma) \geq 2r+2$.
\item[(ii)] $\mathbb{L}^d/\Gamma$ is weakly $r$-locally $\mathbb{L}^d$ if and only if $D(\Gamma) \geq 2r+1$.
\end{itemize}
\end{lemma}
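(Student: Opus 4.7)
The proof studies the quotient map $\pi: V(\mathbb{L}^d) \to V(\mathbb{L}^d/\Gamma)$ sending $x$ to $\Orb_\Gamma(x)$, and shows that the displacement hypothesis makes $\pi$ induce an isomorphism of the appropriate links. I would first record the key injectivity fact: if $D(\Gamma) \geq 2r+1$, then $\pi|B_r(x)$ is injective for every $x \in V(\mathbb{L}^d)$. Indeed, if $y \neq z$ both lie in $B_r(x)$ with $\pi(y) = \pi(z)$, then $z = \gamma(y)$ for some $\gamma \in \Gamma \setminus \{\Id\}$, forcing $d(y, \gamma(y)) \leq d(y,x) + d(x,z) \leq 2r$, contradicting the definition of $D(\Gamma)$. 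A routine walk-lifting argument (using that an edge of $\mathbb{L}^d/\Gamma$ arises from a $\Gamma$-orbit of edges of $\mathbb{L}^d$, and translating such a lifted edge by a suitable $\gamma$) shows that $\pi$ also maps $B_r(x)$ onto $B_r(\pi(x), \mathbb{L}^d/\Gamma)$, yielding a vertex-bijection.

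For the $\Leftarrow$ direction of (ii), assume $D \geq 2r+1$. The vertex-bijection above plus a short argument show that $\pi|B_r(x)$ extends to a graph isomorphism from $\Link_r^-(x, \mathbb{L}^d)$ to $\Link_r^-(\pi(x), \mathbb{L}^d/\Gamma)$: any edge of $\Link_r^-(\pi(x), \mathbb{L}^d/\Gamma)$ has at least one endpoint $v$ with $d(\pi(x), v) < r$, and lifting any $\mathbb{L}^d$-edge representing it by a suitable $\gamma$ places one endpoint at the (unique) lift of $v$ in $B_r(x)$ while the other endpoint is automatically at distance $\leq r$ from $x$, and hence equals the unique lift of its image. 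For the $\Leftarrow$ direction of (i), assume $D \geq 2r+2$; the only new situation is an edge $\{v, w\}$ of the quotient between two vertices at distance exactly $r$ from $\pi(x)$. Let $a, b \in B_r(x)$ be the unique lifts of $v,w$, and let $\{a, c\}$ be any $\mathbb{L}^d$-edge with $\pi(c) = w$ obtained by translating a pre-image edge to have first endpoint $a$. Then $b, c$ are both lifts of $w$ and $d(b,c) \leq d(b,a) + d(a,c) \leq 2r+1 < D$, so by injectivity of lifts we have $c = b$, giving the required edge in $\Link_r(x, \mathbb{L}^d)$.

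For the $\Rightarrow$ direction of (ii), assume $D \leq 2r$ and pick $\gamma \neq \Id$ and $x$ achieving this. Let $y$ be the vertex at position $\lceil D/2 \rceil$ on a geodesic from $x$ to $\gamma(x)$: then $x$ and $\gamma(x)$ are distinct elements of $B_r(y, \mathbb{L}^d)$ with the same image under $\pi$, so since $\pi$ always maps $B_r(y)$ onto $B_r(\pi(y), \mathbb{L}^d/\Gamma)$ (by the walk-lifting argument), we get $|B_r(\pi(y), \mathbb{L}^d/\Gamma)| < |B_r(y, \mathbb{L}^d)| = |B_r(0, \mathbb{L}^d)|$. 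This contradicts weak $r$-locality, which forces vertex-counts of the balls to match.

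The main obstacle is the $\Rightarrow$ direction of (i), which requires ruling out the case $D = 2r+1$ (the implication from $r$-locally to weakly $r$-locally together with (ii) already gives $D \geq 2r+1$). The key idea is to use the bipartiteness of $\mathbb{L}^d$: since $2r+1$ is odd, any $\gamma \in \Gamma$ achieving displacement $2r+1$ must swap the two bipartition classes of $\mathbb{L}^d$. Picking such a $\gamma$ and a geodesic $x = x_0, x_1, \ldots, x_{2r+1} = \gamma(x)$, the unique lifts in $B_r(x)$ of $\pi(x_r)$ and $\pi(x_{r+1})$ are $x_r$ and $\gamma^{-1}(x_{r+1})$ respectively, both at distance $r$ from $x$ and hence in the same bipartition class, so they are \emph{not} adjacent in $\mathbb{L}^d$. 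However, $\{x_r, x_{r+1}\} \in E(\mathbb{L}^d)$ projects to an edge of $\Link_r(\pi(x), \mathbb{L}^d/\Gamma)$ with no corresponding edge in $\Link_r(x, \mathbb{L}^d)$, giving $|E(\Link_r(\pi(x), \mathbb{L}^d/\Gamma))| > |E(\Link_r(x, \mathbb{L}^d))| = |E(\Link_r(0, \mathbb{L}^d))|$ and contradicting the $r$-local isomorphism.
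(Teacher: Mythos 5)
Your proposal is correct and takes essentially the same route as the paper's proof: sufficiency is obtained by showing the orbit map restricts to an isomorphism (resp.\ bijective homomorphism) of the $r$-links, the case $D(\Gamma)\leq 2r$ is excluded by comparing ball sizes, and the case $D(\Gamma)=2r+1$ in part (i) is excluded via bipartiteness of $\mathbb{L}^d$. The only cosmetic difference is that the paper packages the last step as the quotient containing an odd cycle of length $2r+1$ (which $\Link_r(0,\mathbb{L}^d)$ cannot contain), whereas you phrase it as an edge-count discrepancy between the two links; the underlying idea is identical.
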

 
\begin{proof}
(i) Suppose $\Gamma \leq \Aut(\mathbb{L}^d)$ has $D(\Gamma) \geq 2r+2$. Let $x \in V(\mathbb{L}^d)$. Then it is easy to check that the orbit map $x \mapsto \Orb_{\Gamma}(x)$ is a graph isomorphism from $\Link_r(x,\mathbb{L}^d)$ to $\Link_{r}(\Orb(x),\mathbb{L}^d/\Gamma)$ which maps $x$ to $\Orb(x)$, so $\mathbb{L}^d/\Gamma$ is $r$-locally $\mathbb{L}^d$.

On the other hand, suppose $\Gamma \leq \Aut(\mathbb{L}^d)$ has $D(\Gamma) \leq 2r+1$. If $D(\Gamma) = 2r+1$, then there exist $y,z \in V(\mathbb{L}^d)$ and $\gamma \in \Gamma$ with $d_{\mathbb{L}^d}(y,z) = 2r+1$ and $\gamma(y)=z$, so $\Orb(y)=\Orb(z)$. Let $(y,x_1,x_2,\ldots,x_{2r},z)$ be a geodesic in $\mathbb{L}^d$ from $y$ to $z$; then
$$\Orb(y) \Orb(x_1) \Orb(x_2) \ldots \Orb(x_{2r})\Orb(y)$$
is a cycle in $\mathbb{L}^d/\Gamma$ of length $2r+1$, so $\mathbb{L}^d/\Gamma$ is not $r$-locally $\mathbb{L}^d$. We may assume henceforth that $D(\Gamma) \leq 2r$.

Now note that if $\{\Orb(x),\Orb(y)\} \in E(\mathbb{L}^d/\Gamma)$, then there exists $z \in \Orb(y)$ such that $\{x,z\} \in E(\mathbb{L}^d)$. Indeed, if $\{\Orb(x),\Orb(y)\} \in E(\mathbb{L}^d/\Gamma)$, then there exist $\gamma,\gamma' \in \Gamma$ such that $\{\gamma(x),\gamma'(y)\} \in E(\mathbb{L}^d)$, and therefore $\{x,\gamma^{-1}\gamma'(y)\} \in E(\mathbb{L}^d)$, so we may take $z = \gamma^{-1}\gamma'(y)$. Similarly, if
$$(\Orb(x_0),\Orb(x_1),\Orb(x_2),\ldots,\Orb(x_l))$$
is a path in $\mathbb{L}^d/\Gamma$, then there exist $z_i \in \Orb(x_i)$ for each $i \in [l]$ such that $(x_0,z_1,z_2,\ldots,z_l)$ is a path in $\mathbb{L}^d$. It follows that the number of vertices of $\mathbb{L}^d/\Gamma$ of distance at most $r$ from $\Orb(x)$ is precisely the number of $\Gamma$-orbits of $V(\mathbb{L}^d)$ intersecting the ball $B_r(x,\mathbb{L}^d)$.

Since $D(\Gamma) \leq 2r$, there exist $y,z \in V(\mathbb{L}^d)$ and $\gamma \in \Gamma$ such that $d_{\mathbb{L}^d}(y,z) \leq 2r$ and $\gamma(y)=z$, so $\Orb(y)=\Orb(z)$. Choose $x \in V(\mathbb{L}^d)$ such that $y,z \in B_r(x,\mathbb{L}^d)$. Then $\Orb(y)$ intersects $B_r(x,\mathbb{L}^d)$ in at least two vertices ($y$ and $z$), so
$$ |B_{r}(\Orb(x),\mathbb{L}^d/\Gamma)| = \textrm{no. of }\Gamma\textrm{-orbits of }V(\mathbb{L}^d)\textrm{ intersecting }B_r(x,\mathbb{L}^d) < |B_r(x,\mathbb{L}^d)|.$$
It follows that $\mathbb{L}^d/\Gamma$ is not $r$-locally $\mathbb{L}^d$.

(ii) Suppose $\Gamma \leq \Aut(\mathbb{L}^d)$ has $D(\Gamma) \geq 2r+1$. Let $x \in V(\mathbb{L}^d)$. It is easy to check that the orbit map $x \mapsto \Orb_{\Gamma}(x)$ is a bijective graph homomorphism from $\Link_r(x,\mathbb{L}^d)$ to $\Link_{r}(\Orb(x),\mathbb{L}^d/\Gamma)$ which maps $x$ to $\Orb(x)$. Hence, $\mathbb{L}^d/\Gamma$ is weakly $r$-locally $\mathbb{L}^d$.

On the other hand, suppose $\Gamma \leq \Aut(\mathbb{L}^d)$ has $D(\Gamma) \leq 2r$. Then, by the same argument as in part (i),
$$|B_{r}(\Orb(x),\mathbb{L}^d/\Gamma)| = \textrm{no. of }\Gamma\textrm{-orbits of }V(\mathbb{L}^d)\textrm{ intersecting }B_r(x,\mathbb{L}^d) < |B_r(x,\mathbb{L}^d)|,$$
so $\mathbb{L}^d/\Gamma$ is not weakly $r$-locally $\mathbb{L}^d$.
\end{proof}

We can now deduce the following exact characterisation of the graphs which are $3$-locally $\mathbb{L}^d$, or weakly $3$-locally $\mathbb{L}^d$.
\begin{proposition}
\label{prop:quotient-d}
Let $d \in \mathbb{N}$ with $d \geq 2$, and let $G$ be a connected graph. Then
\begin{enumerate}
\item[(i)] $G$ is 3-locally $\mathbb{L}^d$ if and only if $G$ is isomorphic to $\mathbb{L}^d/\Gamma$, where $\Gamma \leq \Aut(\mathbb{L}^d)$ with $D(\Gamma) \geq 8$;
\item[(ii)] $G$ is weakly 3-locally $\mathbb{L}^d$ if and only if $G$ is isomorphic to $\mathbb{L}^d/\Gamma$, where $\Gamma \leq \Aut(\mathbb{L}^d)$ with $D(\Gamma) \geq 7$.
\end{enumerate}
\end{proposition}
\begin{proof}
Let $G$ be a connected graph which is weakly 3-locally $\mathbb{L}^d$. It follows immediately from Theorem \ref{thm:cover-d}, Lemma \ref{lemma:free} and Lemma \ref{lemma:iso} that $G$ is isomorphic to $\mathbb{L}^d/\Gamma$, where $\Gamma$ is a subgroup of $\Aut(\mathbb{L}^d)$ which acts freely on $\mathbb{L}^d$. By part (ii) of Lemma \ref{lemma:min-dist-r} applied with $r=3$, we have $D(\Gamma) \geq 7$. If in addition, $G$ is 3-locally $\mathbb{L}^d$, then by part (i) of Lemma \ref{lemma:min-dist-r}, we have $D(\Gamma) \geq 8$. The converse of each statement follows immediately from Lemma \ref{lemma:min-dist-r}.
\end{proof}

To show that for each $d \geq 7$, the orbit space $\mathbb{R}^d/\Gamma$ in Corollary \ref{corr:structure-d} need not be a topological manifold, it suffices to exhibit (for each $d \geq 7$) a subgroup $\Gamma \leq \Aut(\mathbb{L}^d)$ satisfying the conditions of Proposition \ref{prop:quotient-d} (ii) and such that $\mathbb{R}^d/\Gamma$ is not a topological manifold. We do this below.

\begin{example}
\label{example:not-manifold}
Let
$$\Gamma = \langle \{(x \mapsto x+2de_i):\ i \in [d]\} \cup \{(x \mapsto (1,1,\ldots,1)-x)\}\rangle \leq \Aut(\mathbb{L}^d).$$
Note that $\Gamma$ contains an element of order 2, so is not torsion-free and therefore is not a Bieberbach group. It has $|\mathbb{Z}^d/\Gamma| < \infty$ and has minimum displacement $d$, so by Lemma \ref{lemma:min-dist-r}, $\mathbb{L}^d/\Gamma$ is weakly 3-locally $\mathbb{L}^d$ if $d \geq 7$ (and indeed 3-locally-$\mathbb{L}^d$, if $d \geq 8$). However, for each $d \geq 3$, $\mathbb{R}^d/\Gamma$ is not a topological manifold. This follows, for example, from the fact that a small metric ball around the point $[(1/2,1/2,\ldots,1/2)]$ has topological boundary homeomorphic to $(d-1)$-dimensional projective space $\mathbb{RP}^{d-1}$, whereas it is known that no subset of $\mathbb{R}^{d}$ is homeomorphic to $\mathbb{RP}^{d-1}$ if $d \geq 3$. (See \cite{hatcher} Chapter 3, p. 256 for a proof of this in the case of odd $d$, and \cite{thom} for a proof for all $d \geq 3$.)
\end{example}

\section{Conclusion and related problems}
\label{sec:conc}
Theorem \ref{thm:cover-d} states that a connected graph $G$ which is weakly 3-locally $\mathbb{L}^d$ is normally covered by $\mathbb{L}^d$. Example \ref{example:Ld} shows that for each $d \geq 3$, the hypothesis of Theorem \ref{thm:cover-d} cannot be weakened to $G$ being 2-locally $\mathbb{L}^d$. Nevertheless, Example \ref{example:Ld} is still `highly structured', and we are not aware of any essentially different alternative constructions. It would be interesting to obtain a (weaker) structure theorem for graphs which are $2$-locally $\mathbb{L}^d$ (for each $d \geq 3$), and to do the same for graphs which are weakly 2-locally $\mathbb{L}^d$ (for each $d \geq 3$).

Our results imply that if $r \geq r^{*}(d)$, then a connected graph which is $r$-locally $\mathbb{L}^d$ is covered by $\mathbb{L}^d$, where
$$r^{*}(d) := \begin{cases} 2 & \textrm{if } d = 2;\\ 3 & \textrm{if } d \geq 3.\end{cases}$$
 Benjamini and Georgakopoulos conjectured the following generalisation of this fact.

\begin{conjecture}[Benjamini, Georgakopoulos]
\label{conj:finitely-presented}
Let $\Gamma$ be a finitely presented group, and let $F$ be a connected, locally finite Cayley graph of $\Gamma$. Then there exists $r \in \mathbb{N}$ such that if $G$ is a graph which is $r$-locally $F$, then $F$ covers $G$.
\end{conjecture}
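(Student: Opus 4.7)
The plan is to reduce the conjecture to a statement about universal covers of 2-complexes. Fix a finite presentation $\Gamma = \langle S \mid R_1,\ldots,R_N \rangle$ and write $F = \Cay(\Gamma,S)$. Set $\ell = \max_j |R_j|$. I would aim to prove the conjecture with $r = \ell$ (or $\ell+1$ to be safe), so that every relator cycle based at a vertex lies inside its $r$-ball.

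First I would build the Cayley 2-complex $X_F$ of the presentation by taking $F$ as the 1-skeleton and attaching, for each $g \in \Gamma$ and each relator $R_j$, a closed 2-disc along the closed walk at $g$ spelled out by $R_j$. A standard argument (essentially the content of van Kampen's theorem) shows that $X_F$ is simply connected. Given a connected graph $G$ which is $r$-locally $F$, pick for each $v \in V(G)$ a graph isomorphism $\phi_v : \Link_r(e,F) \to \Link_r(v,G)$ with $\phi_v(e)=v$; under $\phi_v$ each relator cycle at $e$ in $F$ is sent to a closed walk at $v$ in $G$. Attaching a 2-cell to $G$ along every such closed walk produces a 2-complex $X_G$. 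The $r$-local hypothesis guarantees that each $\phi_v$ extends to a homeomorphism of small open neighbourhoods in the two 2-complexes, so (granted this extension can be done coherently) $X_G$ is locally homeomorphic to $X_F$ at every point.

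Next I would produce the covering map by lifting paths, in direct analogy with the proof of Theorem \ref{thm:cover-d}. Fix $v_0 \in V(G)$ and $\phi_{v_0}$, set $p(e) = v_0$, and extend $p:V(F) \to V(G)$ along edges using the local data. To verify well-definedness one checks that two edge-walks in $F$ with the same endpoints yield the same endpoint in $G$. Any two such walks differ, in the free group on $S$, by a product of conjugates of relators and trivial pairs $ss^{-1}$, and since $r \geq \ell$ every relator-cycle is by construction of $X_G$ mapped to a closed walk in $G$; combined with the unique walk-lifting property inherited from the local isomorphisms, this forces the endpoint in $G$ to be well-defined. The resulting map $p$ is a covering $F \to G$, obtained (after restriction to 1-skeleta) from a covering $X_F \to X_G$, which is universal because $X_F$ is simply connected.

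The hard part will be showing that the local isomorphisms $\phi_v$ can be chosen coherently enough that $X_G$ is well-defined and locally looks like $X_F$ at every vertex simultaneously. Different choices of $\phi_v$ differ by an automorphism of the rooted graph $(\Link_r(e,F),e)$, which may permute the generators at $v$ in a manner incompatible with adjacent choices; this is precisely the obstruction exhibited by Example \ref{example:Ld}, where for $d\ge 3$ the group of rooted automorphisms of $\Link_2(e,\mathbb{L}^d)$ admits ``twists'' that are not induced by any element of $\Aut(\mathbb{L}^d)$. I expect that the right formulation is that $r$ must be large enough that every rooted automorphism of $\Link_r(e,F)$ extends to an automorphism of $F$ as a Cayley graph of $\Gamma$ --- equivalently, that the automorphism group of $(\Link_r(e,F),e)$ coincides with the stabiliser of $e$ in $\Aut(F)$. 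For a finitely presented group this should be achievable with $r$ depending only on the presentation: a spanning-tree argument rooted at $v_0$ (transport $\phi_{v_0}$ along the tree and close up along non-tree edges via the relator cycles) should then make the $\phi_v$ coherent. Controlling the exact quantity and making this patching argument rigorous is the technical crux of a proof.
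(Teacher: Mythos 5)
This statement is a \emph{conjecture}: the paper gives no proof of it, and in fact reports (immediately after stating it) that de la Salle and Tessera have \emph{disproved} it in \cite{dST}. So no proof along your lines, or any other, can succeed. The honest positive result in this direction is Theorem \ref{thm:positive}, which needs the extra hypothesis that $\Aut(F)$ has finite vertex-stabilisers --- a condition a Cayley graph of a finitely presented group need not satisfy (the tree $T_d$ already fails it), and the counterexamples exploit exactly this.

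The point at which your argument breaks is precisely the step you flag as the ``technical crux'', and it is not a technicality but a genuine obstruction. Being $r$-locally $F$ gives you, at each vertex $v$ of $G$, \emph{some} isomorphism $\phi_v$ of rooted $r$-balls, but it gives you no way to recover the labelling of edges by generators of $\Gamma$: two admissible choices of $\phi_v$ differ by a rooted automorphism of $\Link_r(e,F)$, and such automorphisms need not extend to colour-preserving automorphisms of $F$ (nor to automorphisms of $F$ at all). Consequently, as you transport $p$ along a path in $G$, the identification of edges with generators can ``twist'', and the closed walk in $G$ traced out by a relator word need not close up --- your well-definedness argument tacitly assumes the very consistency it is meant to establish. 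Increasing $r$ does not cure this: for suitable finitely presented $\Gamma$ and generating set $S$ the incompatibility persists for every $r$, which is how the conjecture is refuted. (Your proposed choice $r=\ell$ is also already too optimistic in concrete cases the paper controls: for $\mathbb{Z}^d$ with the standard presentation one has $\ell=4$, yet the sharp threshold established by Theorem \ref{thm:cover-d} and Example \ref{example:Ld} is $r=3$, reflecting that what matters is not containment of relator cycles in a ball but the rigidity of the ball's rooted automorphisms.) If you want a true statement to prove, aim at Theorem \ref{thm:positive}, where the finite-stabiliser hypothesis is exactly what lets one choose the local isomorphisms coherently.
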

(Note that this conjecture appeared in \cite{coarse-geometry} without the assumption of $\Gamma$ being finitely presented. It is easy to see that the conjecture is false without this assumption, however, and the conjecture was afterwards amended to the above.)

In \cite{dST}, de la Salle and Tessera disprove Conjecture \ref{conj:finitely-presented}; they also prove several positive results, among which is the following. If $F$ is a graph and $k \in \mathbb{N}$ with $k \geq 2$, we let $P_k(F)$ denote the $2$-dimensional polygonal CW complex whose 1-skeleton is $F$, and whose $2$-cells are the cycles of length at most $k$ in $F$. Following \cite{dST}, we say that $F$ is {\em simply connected at level $k$} if $P_k(F)$ is simply connected, and we say that $F$ is {\em large-scale simply connected} if there exists $k \geq 2$ such that $F$ is simply connected at level $k$.
\begin{theorem}[De La Salle, Tessera]
\label{thm:positive}
Let $F$ be a connected, locally finite graph which is large-scale simply connected, and which has $|V(F)/\Aut(F)| < \infty$ and $|\Stab_{\Aut(F)}(v)| < \infty$ for all $v \in V(F)$. Then there exists $r = r(F) \in \mathbb{N}$ such that if $G$ is a graph which is $r$-locally $F$, then $F$ covers $G$.
\end{theorem}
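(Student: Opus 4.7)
The plan is to build the covering map $p : F \to G$ by a develop-and-fold argument along walks in $F$, generalizing the proofs of Theorems~\ref{thm:cover-2} and~\ref{thm:cover-d}. Fix $k \geq 2$ so that $P_k(F)$ is simply connected, and choose $r$ sufficiently large that every cycle of length at most $k$ sits inside a ball of radius $r$ around every one of its vertices, and, crucially, the restriction map $\Stab_{\Aut(F)}(w) \to \Aut(\Link_r(w, F))$ is injective for every $w \in V(F)$. Such an $r$ exists because $|V(F)/\Aut(F)| < \infty$ reduces this to finitely many orbit representatives, each of which has finite stabilizer, and the intersection over all $r$ of the kernels of these restriction maps is trivial: an automorphism of $F$ fixing every finite ball must fix every vertex, hence be the identity.

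Fix base points $u \in V(F)$ and $v_0 \in V(G)$, and an isomorphism $\phi_0 : \Link_r(u, F) \to \Link_r(v_0, G)$ with $\phi_0(u) = v_0$, supplied by the $r$-locally-$F$ hypothesis. For any walk $W = (u = w_0, w_1, \ldots, w_n)$ in $F$, I would inductively define a vertex $p_W(w_i) \in V(G)$ and a local isomorphism $\phi_{W, i} : \Link_r(w_i, F) \to \Link_r(p_W(w_i), G)$ with $\phi_{W, i}(w_i) = p_W(w_i)$: given $\phi_{W, i}$, set $p_W(w_{i+1}) := \phi_{W, i}(w_{i+1})$, and choose $\phi_{W, i+1}$ (from the $r$-locally-$F$ hypothesis, adjusted by a suitable element of $\Stab_{\Aut(F)}(w_{i+1})$) so that it agrees with $\phi_{W, i}$ on the overlap $\Link_r(w_i, F) \cap \Link_r(w_{i+1}, F) \supseteq \Link_{r-1}(w_{i+1}, F)$. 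The stabilizer-injectivity condition then makes this $\phi_{W, i+1}$ uniquely determined, so the rule is canonical.

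To show that $p_W(w_n)$ depends only on $w_n$, I would invoke the large-scale simple connectedness of $F$: any two walks from $u$ to the same endpoint are related by a finite sequence of elementary homotopies across 2-cells of $P_k(F)$, i.e.\ across cycles of length at most $k$. So it suffices to check that the germ $\phi_{W, i}$ is unchanged by transport around a single cycle $C$ of length $\leq k$. For any vertex $w$ on $C$, the cycle and a neighborhood of it lie inside $\Link_r(w, F)$ by the choice of $r$, so the isomorphism $\phi_{W, i}$ at $w$ already supplies explicit graph isomorphisms between the relevant portions of $F$ and $G$ at every vertex of $C$; the uniqueness of the inductive extension forces the propagated germs around $C$ to coincide with the corresponding restrictions of $\phi_{W, i}$, so returning to $w$ recovers $\phi_{W, i}$ itself. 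Hence $p := p_W$ is well-defined, and by construction it restricts at each vertex $w$ to the bijection of neighbourhoods induced by $\phi_{W, n}$, which is precisely the graph-covering condition.

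The hardest part will be making the inductive extension step genuinely unique and compatible with overlaps — in particular, producing the "adjustment by a stabilizer element" that matches $\phi_{W, i+1}$ to $\phi_{W, i}$ on the overlap (which requires relating the ambiguity in local isomorphisms of $\Link_r(w, F)$ to actual elements of $\Stab_{\Aut(F)}(w)$), and confirming that the resulting propagation is trivial around every short cycle. Both hypotheses of the theorem are essential in this step: large-scale simple connectedness reduces global monodromy to monodromy around cycles of length at most $k$, while quasi-transitivity combined with finite stabilizers provides, via the stabilizer-injectivity property, the algebraic rigidity needed to force that short-cycle monodromy to be the identity.
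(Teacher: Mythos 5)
First, a point of reference: the paper does not prove this theorem. It is quoted from de la Salle and Tessera \cite{dST} as a known positive result, so there is no in-paper proof to compare your argument against; any proof must reconstruct (a version of) their argument. Your develop-and-fold outline has the right general shape --- propagate germs of local isomorphisms along walks in $F$ and use large-scale simple connectedness to reduce the monodromy to cycles of length at most $k$ --- but as written it has a genuine gap at exactly the step you flag as hardest, and that step is where essentially all of the work in \cite{dST} lives.

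The gap: your extension step needs, for adjacent $w_i,w_{i+1}$, a \emph{unique} isomorphism $\Link_r(w_{i+1},F)\to\Link_r(p_W(w_{i+1}),G)$ agreeing with $\phi_{W,i}$ on $\Link_{r-1}(w_{i+1},F)$. Existence is not supplied by the hypothesis: being $r$-locally $F$ gives an isomorphism of $\Link_r(p_W(w_{i+1}),G)$ with the $r$-ball around \emph{some} vertex of $F$, not necessarily around $w_{i+1}$; and even after recentering, two such isomorphisms differ by an automorphism of the finite graph $\Link_r(w_{i+1},F)$ fixing its center, which need not be the restriction of any element of $\Stab_{\Aut(F)}(w_{i+1})$. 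So ``adjusting by a stabilizer element'' does not reach all candidates, and your injectivity-of-restriction condition only controls those ambiguities that do come from global automorphisms. What is needed is a local-to-global rigidity lemma of the form: for suitable $r'\gg r$, every isomorphism between $r'$-balls of $F$ agrees on the inner $r$-balls with (the restriction of) a global automorphism of $F$. This is a real theorem requiring the quasi-transitivity and finite-stabilizer hypotheses, and it forces the argument to juggle several nested radii. A second instance of the same radius issue appears in your monodromy step: $\Link_r(w_j,F)\not\subseteq\Link_r(w,F)$ for $w_j\neq w$ on the cycle $C$, so the ``base'' isomorphism whose restrictions are supposed to witness trivial monodromy must be taken at a radius at least $r+k/2$, not $r$. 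Until the rigidity lemma is stated and proved and the radii are separated, the claimed canonicity of the propagation --- and hence the well-definedness of $p$ --- is unsupported.
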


Since $\mathbb{L}^d$ satisfies the hypotheses of Theorem \ref{thm:positive} for any $d \in \mathbb{N}$, the $\mathbb{L}^d$-case of Theorem \ref{thm:positive} implies a weakened version of Theorem \ref{thm:cover-d}.

It would be interesting to determine more precisely the class of graphs $F$ for which the conclusion of Conjecture \ref{conj:finitely-presented} holds; note that this class contains $T_d$, the infinite $d$-regular tree, which (for $d \geq 3$) does not satisfy the hypotheses of Theorem \ref{thm:positive}, as $\Aut(T_d)$ has infinite vertex-stabilizers. It would also be of interest to obtain good quantitative bounds on $r(F)$ for graphs $F$ in various classes (such as Cayley graphs on nilpotent groups of step $k$).

This paper deals with the properties of {\em all} finite graphs which are $r$-locally $\mathbb{L}^d$, or weakly $r$-locally $\mathbb{L}^d$, for various $r$. In a subsequent paper \cite{in-preparation}, we will study the {\em typical} properties of a uniform random $n$-vertex graph which is $r$-locally $\mathbb{L}^d$. It turns out that, for any integer $r \geq 2$, a graph chosen uniformly at random from the set of all $n$-vertex graphs which are $r$-locally $\mathbb{L}^2$, has largest component of order $o(n)$ and automorphism group of order at least $\exp(\Omega((\log n)^2))$, with high probability. Similarly, for any integer $d \geq 3$ and any integer $r \geq 3$, a graph chosen uniformly at random from the set of all $n$-vertex graphs which are $r$-locally $\mathbb{L}^d$, has largest component of order $o(n)$ and automorphism group of order at least $\exp(\Omega((\log n)^2))$, with high probability. This is in stark contrast to $G_{2d}(n)$, the random $(2d)$-regular graph on $n$ vertices, which is connected with high probability for all $d \geq 2$, and has trivial automorphism group with high probability for all $d \in \{2,3,\ldots,\lfloor (n-4)/2 \rfloor\}$. In \cite{in-preparation}, we make several conjectures regarding what happens when $\mathbb{L}^d$ is replaced by other Cayley graphs of finitely generated groups.

\subsection*{Acknowledgements}
We thank \'Eric Colin de Verdi\`ere, Tsachik Gelander, Agelos Georgakopoulos, Atsuhiro Nakamoto, Behrang Noohi, Oliver Riordan, Mikael de la Salle, Neil Strickland and Romain Tessera for very helpful discussions. We thank the editors of {\em Forum of Mathematics: Sigma} for suggesting a way of shortening our original proof of Theorem \ref{thm:cover-2}, and for other helpful comments. Finally, we thank an anonymous referee for very kindly providing Figure 1.

\end{document}